\newcommand{\mathleft}{\@fleqntrue\@mathmargin0pt}
\newcommand{\mathcenter}{\@fleqnfalse}
\theoremstyle{plain}
\newtheorem{theorem}{Theorem}[section]
\newtheorem*{conjecture}{Conjecture}
\newtheorem*{example*}{Example}
\theoremstyle{definition}
\newtheorem{definition}[theorem]{Definition}
\newtheorem{corollary}[theorem]{Corollary}
\newtheorem{lemma}[theorem]{Lemma}
\newtheorem{proposition}[theorem]{Proposition}
\newtheorem{example}[theorem]{Example}
\newtheorem{remark}[theorem]{Remark}
\newtheorem{code}{Code}
\newtheorem{question}[theorem]{Question}
\newtheorem*{acknowledgement}{Acknowledgement}
\def\colvec#1{\expandafter\colvec@i#1,,,,,,,,,\@nil}
\def\colvec@i#1,#2,#3,#4,#5,#6,#7,#8,#9\@nil{%
  \ifx$#2$ \begin{pmatrix}#1\end{pmatrix} \else
    \ifx$#3$ \begin{pmatrix}#1\\#2\end{pmatrix} \else
      \ifx$#4$ \begin{pmatrix}#1\\#2\\#3\end{pmatrix}\else
        \ifx$#5$ \begin{pmatrix}#1\\#2\\#3\\#4\end{pmatrix}\else
          \ifx$#6$ \begin{pmatrix}#1\\#2\\#3\\#4\\#5\end{pmatrix}\else
            \ifx$#7$ \begin{pmatrix}#1\\#2\\#3\\#4\\#5\\#6\end{pmatrix}\else
              \ifx$#8$ \begin{pmatrix}#1\\#2\\#3\\#4\\#5\\#6\\#7\end{pmatrix}\else
                 \PackageError{Column Vector}{The vector you tried to write is too big, use pmatrix instead}{Try using the pmatrix environment}
              \fi
            \fi
          \fi
        \fi
      \fi
    \fi
  \fi 
}  
\title{The Knutson Index of the Representation Ring} 
\author{Diego Martín Duro}
\date{\vspace{-1ex}}
\begin{document}

\maketitle

\begin{abstract}
In this paper, we study if, for a given simple module over a Hopf algebra, there exists a virtual module such that their tensor product is the regular module. This is related to a conjecture by Donald Knutson, later disproved and refined by Savitskii, stating that for every irreducible character of a finite group, there exists a virtual character such that their tensor product is the regular character. We also introduce the Knutson Index as a measure of Knutson's Conjecture failure, discuss its algebraic properties and present counter-examples to Savitskii's Conjecture.
\end{abstract}

\section{Introduction}
The set of virtual complex characters over a group, i.e., integer linear combinations of irreducible complex characters, forms a ring with direct sum as addition and tensor product as multiplication. Donald Knutson conjectured in 1973 that every irreducible character $\chi$ of a finite group $G$ is regular invertible, this means that there exists a virtual character $\varphi \in \mathbb{Z}[\text{Irr}(G)]$ such that $\chi \otimes \varphi = \rho_\text{reg}$, where $\rho_\text{reg}$ is the regular character \cite{knutson}. In 1992 Savitskii observed that this conjecture failed for $SL_2(\mathbb{F}_5)$ and refined Knutson's Conjecture by introducing a pre-order on the set of virtual complex characters and claimed that an irreducible character $\chi$ is regular invertible if and only if it is minimal with respect to this pre-order \cite{savitskii}.

The main contributions of the paper are introducing the Knutson Index of an irreducible complex character $\chi$ of a finite group $G$ as the smallest positive integer $n$ such that there exists a virtual character $\varphi \in \mathbb{Z}[\text{Irr}(G)]$ such that $\chi \otimes \varphi = n \rho_\text{reg}$ and showing that this index can be computed by a non-standard restriction of $\chi$ to the Sylow subgroups of $G$. The other key result is the disproval of both directions of Savitskii's Conjecture.

We start by introducing the more general (left) Knutson Index of a module over a bialgebra, which measures the failure of Knutson's Conjecture and we then define bialgebras of Knutson type as the bialgebras that satisfy the generalisation of Knutson's Conjecture. Next, we present some properties of this index for modules over Hopf algebras. 

In the second section, we focus on group algebras. We find, using Sage, more counter-examples to Knutson's Conjecture and compute their Knutson Indices. We also show that all group algebras of an extraspecial or dihedral group are of Knutson type. Next, we introduce the Savitskii Restriction to a Sylow $p$-subgroup and the Savitskii Induced Inverse to show that the Knutson Index of a module is the product of the Knutson Indices of its Savitskii Restrictions to all Sylow $p$-subgroup. This is a very useful tool that allows us to compute the Knutson Indices for more families of group algebras and check whether they are of Knutson type. For example, we prove Savitskii's Criterion on the validity of Knutson's Conjecture for $\mathbb{C}[{\rm SL}_2(\mathbb{F}_q)]$ and $\mathbb{C}[{\rm PSL}_2(\mathbb{F}_q)]$ and that a group algebra whose dimension is a product of distinct primes is always of Knutson type.

We then turn the focus of the Knutson Index to the more general case of semisimple Hopf algebras. We start by recalling some properties of semisimple Hopf algebras and considering some examples. This helps us to find some conditions under which a semisimple Hopf algebra is always of Knutson type. For example, we show that if all simple modules of a Hopf algebra $H$ are one-dimensional or $n$-dimensional, then $H$ is of Knutson type. This leads to the corollaries that all semisimple Hopf algebras of dimension $p^3$ and $p^4$ are of Knutson type and that all semisimple Hopf algebras of dimension up to $31$, except in one case of dimension $24$, are also of Knutson type.

In the last section, we disprove the refinement of the conjecture that Savitskii gave after disproving Knutson's Conjecture. We start by showing that the dicyclic group $\text{Dic}_6$ of order $24$ has an irreducible character that is regular invertible but not minimal. We then show that the Savitskii Restriction does not preserve minimality of irreducible characters. This justifies our observation that $\text{D}_{12} \rtimes S_3$ has an irreducible character that is minimal but not regular invertible, despite all minimal irreducible characters of its Sylow $p$-subgroups being regular invertible.

\begin{acknowledgement}
I would like to express my sincere appreciation towards my PhD supervisor Professor Dmitriy Rumynin for introducing me to this interesting topic and for his continued guidance and corrections that have contributed to this paper. This work was supported by the UK Engineering and Physical Sciences Research Council (EPSRC) grant EP/T51794X/1.
\end{acknowledgement}

\section{Knutson Index}
\subsection{Definition and Properties}

\begin{definition}[Knutson Index]
Let $A$ be a bialgebra over a field $k$ and $M$ an $A$-module. The (left) Knutson Index of $M$ is the smallest strictly positive integer $n$ such that
$$ M \otimes N \cong n A $$
for some virtual $A$-module $N$. We denote this by $\mathcal{K}_L(M)$ and refer to $N$ as the (left) $n$-regular inverse of $M$. If no such $n$ exists, we say that Knutson Index of $M$ is infinite.
\end{definition}

The analogous definition of the right Knutson Index does not necessarily agree with this one as the representation ring of a bialgebra is not necessarily commutative. However, the left and right Knutson Index of all cases considered in this paper agree. We will therefore refer to the left Knutson Index of a module just as Knutson Index. We propose the following two questions.

\begin{question}
   Do the left and right Knutson Indices agree for any bialgebra?
\end{question}

\begin{question}
   Do the left and right Knutson Indices agree for any Hopf algebra?  
\end{question}

Let us now introduce the following terminology for bialgebras that satisfy the generalisation of Knutson's Conjecture.

\begin{definition}
If all simple modules over a bialgebra $A$ over a field $k$ have trivial Knutson Index, then we say that $A$ is a bialgebra of Knutson type.
\end{definition}

The notion of the Knutson Index is also related to Kaplansky's Sixth Conjecture. This conjecture asserts that if $A$ is a semisimple Hopf algebra over an algebraically closed field $k$ and $M$ is a simple $A$-module, then the dimension of $M$ divides the dimension of $A$ \cite{kaplansky1975bialgebras}. The next result follows.

\begin{proposition}
All Hopf algebras of Knutson type satisfy Kaplansky's Sixth Conjecture.
\end{proposition}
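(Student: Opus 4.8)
The plan is to exploit the fact that the vector-space dimension descends to a ring homomorphism from the representation ring of $A$ to $\mathbb{Z}$, and then read off the divisibility from the defining equation of the Knutson Index. The only structural input needed is that, for a bialgebra, the tensor product $M \otimes N$ of two modules is built on the tensor product $M \otimes_k N$ of the underlying vector spaces (the module structure coming from the comultiplication), so that $\dim(M \otimes N) = (\dim M)(\dim N)$. Extending $\mathbb{Z}$-linearly over the basis of simple modules, one obtains a well-defined ring homomorphism $\dim \colon R(A) \to \mathbb{Z}$ sending the class of any virtual module to an integer.

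Concretely, I would argue as follows. Let $M$ be a simple $A$-module. Since $A$ is of Knutson type, $\mathcal{K}_L(M) = 1$, so by the definition of the Knutson Index there is a virtual $A$-module $N$ with $M \otimes N \cong A$, where $A$ denotes the regular module. Applying the homomorphism $\dim$ to this identity in $R(A)$ gives $(\dim M)(\dim N) = \dim A$. Because $N$ is a virtual module, i.e.\ an integer combination of simple modules, its dimension $\dim N$ is an integer. Hence $\dim M$ divides $\dim A$, which is exactly the conclusion of Kaplansky's Sixth Conjecture once one notes that $\dim A$, the dimension of the regular module, equals $\dim_k A$. Since $M$ was an arbitrary simple module, the conjecture holds for $A$.

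I expect there to be no serious obstacle: the entire content is a single dimension count. The only points requiring a line of care are the two facts packaged into the homomorphism $\dim$, namely its multiplicativity under tensor product (valid for any bialgebra because the underlying space of $M \otimes N$ is $M \otimes_k N$) and the integrality of $\dim N$ for virtual $N$. The semisimplicity and algebraic closedness hypotheses of Kaplansky's conjecture enter only to guarantee that the statement "$\dim M \mid \dim_k A$ for all simple $M$" is the correct target and that the regular module carries the dimension $\dim_k A$; they play no role in the divisibility step itself, which makes the proposition essentially immediate from the definition of a Hopf algebra of Knutson type.
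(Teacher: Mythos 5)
Your argument is correct and is exactly the dimension-count the paper has in mind: the paper gives no explicit proof (it simply states that the result ``follows'' from the definitions), and the intended justification is precisely that applying the dimension homomorphism to $M \otimes N \cong A$ yields $\dim M \cdot \dim N = \dim A$ with $\dim N \in \mathbb{Z}$. Nothing further is needed.
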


Let us now discuss some preliminary properties of the Knutson Index. The following proposition shows that the Knutson Index of a module over a Hopf algebra with invertible antipode, and in particular over any finite dimensional Hopf algebra, is well defined.

\begin{proposition} \label{prop divides}
If $H$ is a Hopf algebra with invertible antipode and $M$ is a finite dimensional module over $H$, then $\mathcal{K}(M)$ is finite and divides the dimension of $M$.
\end{proposition}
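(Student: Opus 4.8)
The plan is to reduce the whole statement to a single structural fact, the \emph{tensor identity}: for any finite-dimensional $H$-module $M$,
$$ M \otimes H \;\cong\; (\dim M)\, H $$
as left $H$-modules, where the left-hand side carries the diagonal action $h\cdot(m\otimes x)=\sum h_{(1)}m\otimes h_{(2)}x$ and the right-hand side is the left regular module taken with multiplicity. Granting this, finiteness is immediate: choosing the honest (hence virtual) module $N=H$ gives $M\otimes H=(\dim M)H$, so the value $n=\dim M$ is attained and $\mathcal{K}(M)\le \dim M<\infty$.

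To prove the tensor identity I would compare two $H$-module structures on the single vector space $M\otimes H$: the diagonal one above, and the structure in which $H$ acts only on the second tensor factor by left multiplication, $h\cdot(m\otimes x)=m\otimes hx$. The latter is visibly isomorphic to $(\dim M)\,H$, one regular summand per basis vector of $M$. I would then write down the intertwiner $\theta(m\otimes x)=\sum x_{(1)}m\otimes x_{(2)}$ from the second structure to the first, and check directly from multiplicativity of $\Delta$ that it is an $H$-module map (this step works over any bialgebra). Its inverse is $\theta^{-1}(m\otimes x)=\sum S^{-1}(x_{(1)})\,m\otimes x_{(2)}$, and verifying $\theta^{-1}\theta=\theta\theta^{-1}=\mathrm{id}$ reduces, via coassociativity, to the antipode identities $\sum S^{-1}(x_{(2)})x_{(1)}=\epsilon(x)1$ and $\sum x_{(2)}S^{-1}(x_{(1)})=\epsilon(x)1$. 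This is precisely the point at which invertibility of the antipode is indispensable: without $S^{-1}$ the candidate inverse does not even make sense.

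For divisibility I would run a Bézout argument on the set $S=\{\,n>0 : M\otimes N= nH \text{ for some virtual } N\,\}$, which is nonempty since $\dim M\in S$. Let $n_{0}=\mathcal{K}(M)=\min S$ with witness $N_{0}$, so $M\otimes N_{0}=n_{0}H$, and set $d=\gcd(n_{0},\dim M)=a n_{0}+b\dim M$ for integers $a,b$. Tensoring $M$ with the virtual module $aN_{0}+bH$ and using both $M\otimes N_{0}=n_{0}H$ and $M\otimes H=(\dim M)H$ yields $M\otimes(aN_{0}+bH)=dH$, so $d\in S$. Since $d\le n_{0}=\min S\le d$, we conclude $d=n_{0}$, and hence $n_{0}\mid \dim M$.

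The main obstacle is the tensor identity, and inside it the construction and verification of the inverse $\theta^{-1}$; the finiteness and divisibility steps are then purely formal. It is worth emphasising that the forward map $\theta$ is an isomorphism of the \emph{first} kind only, being a module map for any bialgebra, whereas its bijectivity genuinely requires the Hopf structure with invertible antipode. This is exactly why the hypothesis on $H$ cannot be dropped, and it is consistent with the fact that finite-dimensional Hopf algebras, whose antipode is automatically invertible, always fall under the proposition.
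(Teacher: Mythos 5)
Your proof is correct and takes essentially the same route as the paper's: the tensor identity $M \otimes H \cong (\dim M)\, H$ to obtain finiteness, followed by the identical B\'ezout/gcd argument on $a\,\mathcal{K}(M) + b\dim(M) = d$ to get divisibility. The only difference is that you spell out a proof of the tensor identity via the intertwiner $\theta$ and its inverse built from $S^{-1}$ (correctly locating where invertibility of the antipode is used), whereas the paper simply asserts that identity.
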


\begin{proof}
As the antipode of $H$ is invertible, we have that $M \otimes H \cong \text{dim}(M) \ H$. So it follows that $\mathcal{K}(M) \leq \text{dim}(M)$. Let $N$ be a $\mathcal{K}(M)$-regular inverse and $d$ the greatest common divisor of $\mathcal{K}(M)$ and $\text{dim}(M)$. Then there exist integers $a$ and $b$ such that 
$$ a \ \mathcal{K}(M) + b \ \text{dim}(M) = d $$
Hence
$$ M \otimes (a N + b H) \cong a \cdot \mathcal{K}(M) \ H + b \cdot \text{dim}(M) \ H = d \ H $$
We have constructed a $d$-regular inverse of $M$ and it follows by definition of the Knutson Index that $\mathcal{K}(M) \leq d$ and therefore $\mathcal{K}(M) = d$. We conclude that $\mathcal{K}(M)$ divides the dimension of $M$.
\end{proof}

We can define the Knutson Index for comodules analogously with the corresponding tensor product of comodules. In the following example, we show that all simple comodules over $\mathbb{C}[SL_2]$, the Hopf algebra of algebraic functions on $SL_2$, have trivial Knutson Index. Note that the duality between $\mathbb{C}[SL_2]$ and $\mathcal{U}(\mathfrak{sl}_2)$, the universal enveloping algebra of the special linear Lie algebra $\mathfrak{sl}_2$, induces a correspondence between the simple $\mathbb{C}[SL_2]$-comodules and the finite dimensional simple $\mathcal{U}(\mathfrak{sl}_2)$-modules \cite{kasselquantum}. However, we also show that they are not regular invertible as $\mathcal{U}(\mathfrak{sl}_2)$-modules. 

\begin{example}
Denote by $V_m$ the unique simple comodule over $\mathbb{C}[SL_2]$ of dimension $m$. By the Clebsch-Gordon formula, we have that for $m \leq n$
$$ V_m \otimes V_n = V_{n+m} \oplus V_{n+m-2} \oplus \ldots \oplus V_{n-m} $$
We can generalise this formula to
$$ V_m \bigotimes \left( \bigoplus_{n=m}^{\infty} \alpha_n V_n \right) = \bigoplus_{n=m}^{\infty} \left(V_m \bigotimes \alpha_n V_n\right) = \bigoplus_{n=m}^\infty \alpha_n \left( V_{n+m} \oplus V_{n+m-2} \oplus \ldots \oplus V_{m-n} \right) $$
We can therefore find a sequence $\{\alpha_n\}_{n=m}^\infty$ such that the right-hand side is the regular comodule. So $\mathcal{K}(V_m) = 1$ and we conclude that all simple comodules of $\mathbb{C}[SL_2]$ have trivial Knutson. In other words, we have shown that $\mathbb{C}[SL_2]$ is of "Knutson cotype".

We now show that if we consider $V_m$ as a $\mathcal{U}(\mathfrak{sl}_2)$-module, then $\mathcal{K}(V_m) = \text{dim}(V_m) = m$. By the previous proposition, we have that $\mathcal{K}(V_m) \leq m$. Suppose that 
$$ V_m \otimes M \cong k \ \mathcal{U}(\mathfrak{sl}_2) $$
where $k \leq m$. As $\mathcal{U}(\mathfrak{sl}_2)$ is finitely generated, there is a sufficiently large prime $p$ not dividing $m$ such that the previous isomorphism restricted to a field of characteristic $p$ yields
$$ V_m \otimes \overline{M} \cong k \ \mathcal{U}(\mathfrak{sl}_2) $$
Taking dimensions, we obtain that $m \cdot \text{dim}(\overline{M}) = k \cdot p^3$. So $m$ divides $k$ and we conclude that $\mathcal{K}(V_m) = m$.
\end{example}

\subsection{Group algebras and Knutson's Conjecture}

Let us start by giving the following reformulation of Knutson's Conjecture and revisit the smallest of the counter-examples presented by Savistkii to verify that the conjecture does not hold.

\begin{conjecture}[Knutson]
All complex finite dimensional group algebras are of Knutson type.
\end{conjecture}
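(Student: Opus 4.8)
The plan is to unfold the phrase ``of Knutson type'' into its character-theoretic meaning and then attempt to solve the defining equation head-on. By definition, $\mathbb{C}[G]$ is of Knutson type exactly when every irreducible character $\chi \in \text{Irr}(G)$ is regular invertible, that is, when there is a virtual character $\varphi \in \mathbb{Z}[\text{Irr}(G)]$ with $\chi \otimes \varphi = \rho_\text{reg}$. Since evaluation of characters on a set of conjugacy class representatives $g_1 = 1, g_2, \ldots, g_r$ carries tensor product to pointwise multiplication, this single equation splits into the system
$$ \chi(g_i)\,\varphi(g_i) = \rho_\text{reg}(g_i) = |G|\,\delta_{i,1}, \qquad i = 1, \ldots, r. $$
First I would record what this forces. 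At the identity we need $\varphi(1) = |G|/\chi(1)$, which is an integer by the classical theorem that $\chi(1)$ divides $|G|$ (compatibly with Proposition \ref{prop divides}, by which $\mathcal{K}(\chi)$ is finite and divides $\chi(1)$); on each non-identity class we need $\varphi(g_i) = 0$ whenever $\chi(g_i) \neq 0$, while $\varphi(g_i)$ is left completely free on the classes where $\chi$ vanishes.

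Thus the whole statement reduces to one existence question: for every $G$ and every $\chi$, is there an integral combination of irreducible characters whose values are prescribed to be $|G|/\chi(1)$ at the identity, zero on the support of $\chi$ away from the identity, and arbitrary on the vanishing set of $\chi$? Over $\mathbb{C}$ this is immediate, since one merely reads off coefficients against the orthonormal basis $\text{Irr}(G)$; the entire content is therefore an integrality statement. The natural way to package it is through the $\mathbb{Z}$-linear ``multiplication by $\chi$'' endomorphism of the representation ring $R(G) = \mathbb{Z}[\text{Irr}(G)]$: being of Knutson type for $\chi$ is exactly the assertion that $\rho_\text{reg}$ lies in the image $\chi\,R(G)$, and $\mathcal{K}(\chi)$ is the order of the class of $\rho_\text{reg}$ in the cokernel $R(G)/\chi\,R(G)$, which is finite by Proposition \ref{prop divides}. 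I would therefore aim to show that this cokernel contains $\rho_\text{reg}$ in its zero class in all cases.

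To attack the integrality I would localise at each prime $p$ dividing $|G|$ and try to reduce the $p$-part of the cokernel to the Sylow $p$-subgroups, anticipating the Savitskii Restriction and Savitskii Induced Inverse machinery developed later in the paper: the hope would be to assemble a regular inverse prime-by-prime from regular inverses of the restrictions of $\chi$, exploiting that Sylow subgroups are often structurally simple enough (for instance with all simple modules one- or $n$-dimensional) to force trivial index.

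The hard part, and the step I expect to be the genuine obstacle, is precisely this integral and local step. The rational solution always exists, but forcing $\varphi$ to vanish on the entire support of $\chi$ while remaining an honest integral combination of irreducibles imposes congruences among the character values that need not be simultaneously satisfiable; when the support of $\chi$ is large and its column of character values shares common prime factors with several others, the $p$-part of the cokernel can be nontrivial and $\rho_\text{reg}$ need not vanish in it. I expect this obstruction to be unavoidable in general, since it is exactly the mechanism that makes $\mathcal{K}(\chi) > 1$ possible; rather than completing the argument I anticipate that this final step \emph{cannot} be carried out for all $G$, which is why the assertion is stated as a conjecture and why the subsequent sections are devoted to measuring, through the Knutson Index, precisely how and where it fails.
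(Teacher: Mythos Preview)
This statement is a \emph{conjecture}, and the paper does not prove it; immediately after stating it, the paper recalls Savitskii's observation that $\mathbb{C}[\mathrm{SL}_2(\mathbb{F}_5)]$ is a counterexample (the two $4$-dimensional simple modules are not regular invertible), and then proceeds to catalogue further counterexamples. Your proposal correctly anticipates this: you reduce the question to an integrality obstruction in the cokernel $R(G)/\chi R(G)$, identify that this obstruction need not vanish, and explicitly say you expect the final step \emph{cannot} be carried out for all $G$. That is the right conclusion, and your reformulation in terms of the image of multiplication by $\chi$ on $R(G)$ is exactly the mechanism the paper exploits computationally (Code~\ref{span}) and structurally (via the Savitskii Restriction of Theorem~\ref{thm Knutson Index vs reduction}).

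The only thing worth adding is that the paper goes further than merely asserting failure: it pins down the obstruction concretely by exhibiting $\mathrm{SL}_2(\mathbb{F}_5)$ and then quantifies it through the Knutson Index. So rather than leaving the obstruction as an abstract cokernel argument, you could sharpen your conclusion by pointing to a specific $G$ and $\chi$ where the integrality fails; the paper's choice of $\mathrm{SL}_2(\mathbb{F}_5)$ with its $4$-dimensional characters is the minimal such example among group algebras.
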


\begin{example}
The group algebra $\mathds{C}[SL_2(\mathds{F}_5)]$ has nine simple modules. We claim that the two simple modules of dimension four are not regular invertible. This can be checked by computing their tensor product with all simple modules and showing that no integer linear combination of these products yields the regular module. Alternatively, we can show this computationally. See, for example, Code \ref{span} in the appendix. This code on Sage checks via Character Theory which irreducible representations of a group $G$ are regular invertible.
\end{example}

So having verified that Knutson's Conjecture does not hold as $\mathbb{C}[\text{SL}_2(\mathbb{F}_5)]$ is not of Knutson type, we are now interested in finding more counter-examples. As this is a group algebra of dimension $120$, we first compute on Sage with Code \ref{code index} the Knutson Index of all simple modules over group algebras of dimension up to $120$. It turns out that all group algebras up to this dimension are of Knutson type except eight counter-examples. These are $\mathbb{C}[{\rm D}_{12} \rtimes S_3]$, $\mathbb{C}[{\rm C}_3 \rtimes {\rm D}_{24}]$ and $\mathbb{C}[{\rm C}_3^2 \rtimes_2 Q_8]$ of dimension $72$ and $\mathbb{C}[{\rm C}_{15} \rtimes {\rm D}_8]$, $\mathbb{C}[{\rm C}_3 \rtimes {\rm D}_{40}]$, $\mathbb{C}[{\rm C}_5 \rtimes {\rm D}_{24}]$, $\mathbb{C}[{\rm C}_{15} \rtimes Q_8]$ and $\mathbb{C}[\text{SL}_2(\mathbb{F}_5)]$ of dimension $120$.

We then compute the Knutson Index of the potential counter-examples in Savitskii's Criterion for $q \leq 64$ and verify that they are all valid counter-examples to Knutson's Conjecture. We also compute the Knutson Index of the simple modules over some group algebras of larger dimensions and find further group algebras that are not of Knutson type. 

The following table contains these group algebras that are not of Knutson type, including the dimensions of the not regular invertible simple modules and their Knutson Indices. We include all counter-examples from Savitskii's Criterion as we will prove in Corollary \ref{cor Savitskii criterion}, at the end of this subsection, that the criterion holds. We can also check that Proposition \ref{prop divides} holds as the Knutson Index always divides the dimension of the respective module. \\

\begin{table}[h]
\begin{centering}
    \begin{tabular}{c|c|c|c}
        Group & Dimension of  & Dimensions of the simple and & Knutson \\[0pt] 
        algebra & the algebra & not regular invertible modules & Index \\[2pt] \hline
        $\mathbb{C}[{\rm SL}_2(\mathbb{F}_q)]$ & $(q-1) q (q+1)$ & $q-1$ or $q+1$ & $2$ \\[4pt]
        $\mathbb{C}[{\rm PSL}_2(\mathbb{F}_{q'})]$ & $\frac{(q-1) q (q+1)}{2}$ & $q-1$ or $q+1$ & $2$ \\[4pt]
        $\mathbb{C}[{\rm D}_{12} \rtimes S_3]$ & $72$ & $4$ & $2$  \\[4pt] 
        $\mathbb{C}[{\rm C}_3 \rtimes {\rm D}_{24}]$ & $72$ & $4$ & $2$ \\[4pt] 
        $\mathbb{C}[{\rm C}_3^2 \rtimes_2 Q_8]$ & $72$ & $4$ & $2$ \\[4pt] 
        $\mathbb{C}[{\rm C}_{15} \rtimes {\rm D}_8]$ & $120$ & $4$ & $2$ \\[4pt] 
        $\mathbb{C}[{\rm C}_3 \rtimes {\rm D}_{40}]$ & $120$ & $4$ & $2$ \\[4pt] 
        $\mathbb{C}[{\rm C}_5 \rtimes {\rm D}_{24}]$ & $120$ & $4$ & $2$ \\[4pt] 
        $\mathbb{C}[{\rm C}_{15} \rtimes Q_8]$ & $120$ & $4$ & $2$ \\[4pt] 
        $\mathbb{C}[A_{12}]$ & $\frac{12!}{2}$ & $3.696$ & $2$ \\[4pt]
        $\mathbb{C}[A_{13}]$ & $\frac{13!}{2}$ & $6.864$ & $2$ \\[4pt]
        $\mathbb{C}[A_{15}]$ & $\frac{15!}{2}$ & $32.032$ & $2$ \\[4pt]
        $\mathbb{C}[\text{Sz}(8)]$ & $29.120$ & $14$ & $2$ \\[4pt]
        $\mathbb{C}[SL_3(\mathbb{F}_5)]$ & $372.000$ & $124$ & $2$ \\[4pt]
        $\mathbb{C}[SL_3(\mathbb{F}_7)]$ & $5.630.688$ & $57, 288, 342, 399, 456$ & $3$ \\[4pt]
        $\mathbb{C}[SL_3(\mathbb{F}_9)]$ & $42.456.960$ & $728$ & $2$ \\[4pt]
        $\mathbb{C}[M_{22}]$ & $443.520$ & $21$, $210$, $231$ & $3$ \\[4pt]
        $\mathbb{C}[M_{23}]$ & $10.200.960$ & $231$ & $3$ \\[4pt]
        $\mathbb{C}[M_{24}]$ & $244.823.040$ & $252$, $5.544$, $5.796$ & $3$ \\
    \end{tabular} \\
\end{centering}
\ \\
where $q \geq 5$ is odd and $q'$ is odd and not plus or minus one a power of two.
    \caption{Knutson Indices of counter-examples to Knutson's Conjecture}
    \label{Knutson Index}
\end{table}

Note also that a direct product of a counter-example with any other groups yields further counter-examples by the following result.

\begin{proposition}
    Let $M$ be a $\mathbb{C}[G]$-module and $N$ a $\mathbb{C}[H]$-module. Their external tensor product $M \boxtimes N$ is $\mathbb{C}[G \times H]$-module and
    $$ \mathcal{K}(M), \mathcal{K}(N) \leq \mathcal{K}(M \boxtimes N) \leq \mathcal{K}(M) \cdot \mathcal{K}(N) $$
\end{proposition}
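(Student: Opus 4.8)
The plan is to prove the two inequalities separately, relying on two standard identities for the external tensor product over $G \times H$: the regular module decomposes as $\mathbb{C}[G \times H] \cong \mathbb{C}[G] \boxtimes \mathbb{C}[H]$, and for any modules the interchange law $(A \boxtimes B) \otimes (C \boxtimes D) \cong (A \otimes C) \boxtimes (B \otimes D)$ holds as $G \times H$-modules, where $\otimes$ denotes the internal tensor product. Both identities extend bilinearly to virtual modules, since $\boxtimes$ is bi-additive and satisfies $(aA) \boxtimes (bB) \cong ab\,(A \boxtimes B)$.

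For the upper bound $\mathcal{K}(M \boxtimes N) \leq \mathcal{K}(M)\,\mathcal{K}(N)$, I would take a $\mathcal{K}(M)$-regular inverse $P$ of $M$ and a $\mathcal{K}(N)$-regular inverse $Q$ of $N$, so that $M \otimes P \cong \mathcal{K}(M)\,\mathbb{C}[G]$ and $N \otimes Q \cong \mathcal{K}(N)\,\mathbb{C}[H]$ as virtual modules. Applying the interchange law gives $(M \boxtimes N) \otimes (P \boxtimes Q) \cong (M \otimes P) \boxtimes (N \otimes Q) \cong \mathcal{K}(M)\,\mathcal{K}(N)\,(\mathbb{C}[G] \boxtimes \mathbb{C}[H]) \cong \mathcal{K}(M)\,\mathcal{K}(N)\,\mathbb{C}[G \times H]$. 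Thus $P \boxtimes Q$ is a $\mathcal{K}(M)\mathcal{K}(N)$-regular inverse of $M \boxtimes N$, and the bound follows from the definition of the Knutson Index.

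The lower bound is the genuinely delicate part. Let $n = \mathcal{K}(M \boxtimes N)$ and let $R$ be an $n$-regular inverse with character $\Phi$, so that $\chi_M(g)\chi_N(h)\Phi(g,h) = n\,\rho_{\text{reg}}^{G\times H}(g,h)$ for all $(g,h)$. The naive idea of restricting this identity to $G \times \{1\}$ is too lossy: restriction sends $\mathbb{C}[G \times H]$ to $|H|$ copies of $\mathbb{C}[G]$ and only produces a weaker estimate carrying an extra factor of $|H|/\dim(N)$. Instead I would apply the functor of $H$-fixed points, which on characters is $W \mapsto \tfrac{1}{|H|}\sum_{h\in H}\chi_W(g,h)$ and correctly sends $n\,\mathbb{C}[G\times H]$ to $n\,\mathbb{C}[G]$. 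Concretely, I define the class function $\varphi(g) = \tfrac{1}{|H|}\sum_{h\in H}\chi_N(h)\Phi(g,h)$; this is the character of the $H$-fixed-point $G$-module of the virtual $G\times H$-module $(\mathds{1}_G \boxtimes N)\otimes R$, and is therefore a virtual character of $G$. Multiplying by $\chi_M$ and using the defining identity gives
$$ \chi_M(g)\,\varphi(g) = \frac{1}{|H|}\sum_{h\in H}\chi_M(g)\chi_N(h)\Phi(g,h) = \frac{1}{|H|}\sum_{h\in H} n\,\rho_{\text{reg}}^{G\times H}(g,h) = n\,\rho_{\text{reg}}^{G}(g). $$
Hence $\varphi$ is an $n$-regular inverse of $M$, so $\mathcal{K}(M) \le n$; exchanging the roles of $G$ and $H$ and averaging over $G$ instead yields $\mathcal{K}(N) \le n$.

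The main obstacle, and the step I would spend the most care on, is the lower bound: one must find an operation $R(G \times H) \to R(G)$ that simultaneously (i) carries the regular module to the regular module with the correct multiplicity and (ii) outputs a genuine virtual character playing the role of a regular inverse of $M$. The $H$-fixed-point functor achieves (i), while the twist of replacing $M$ by the trivial representation $\mathds{1}_G$ before averaging is what guarantees (ii); verifying that $\varphi$ is integral, i.e. a true virtual character rather than merely a class function, is the key consistency check. The remaining identities are routine once the interchange law and the decomposition of $\mathbb{C}[G\times H]$ are in hand.
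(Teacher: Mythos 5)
Your proof is correct, and the two halves should be judged separately. The upper bound is exactly the paper's argument: the external tensor product of a $\mathcal{K}(M)$-regular inverse and a $\mathcal{K}(N)$-regular inverse is a $\mathcal{K}(M)\mathcal{K}(N)$-regular inverse of $M \boxtimes N$ via the interchange law. Where you genuinely diverge is the lower bound. The paper disposes of it in one clause by ``restricting the regular inverse to $\mathbb{C}[G]$'', which, taken literally, only yields $\dim(N)\,(M \otimes R|_G) \cong n|H|\,\mathbb{C}[G]$ and hence $\mathcal{K}(M) \mid n|H|/\dim(N)$ --- precisely the lossy estimate you flag, and not enough on its own to give $\mathcal{K}(M)\le n$. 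Your replacement --- averaging $\Phi$ against $\chi_N$ over $H$, i.e.\ taking $H$-fixed points of $(\mathds{1}_G \boxtimes N)\otimes R$ --- is the right fix: it carries $n\,\mathbb{C}[G\times H]$ to $n\,\mathbb{C}[G]$ and produces a class function $\varphi$ with $\chi_M\varphi = n\,\rho_{\text{reg}}^G$, whence $\mathcal{K}(M)\le n$, and symmetrically for $N$. Your integrality worry is settled immediately by expanding $\Phi$ in the basis $\chi\boxtimes\psi$: the $H$-average contributes $\frac{1}{|H|}\sum_{h}\chi_N(h)\psi(h) = \langle\psi,\chi_{N^*}\rangle \in \mathbb{Z}$ for each $\psi\in\mathrm{Irr}(H)$, so $\varphi\in\mathbb{Z}[\mathrm{Irr}(G)]$. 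In short, your argument is a sharper and fully rigorous version of the restriction the paper gestures at; the paper's phrasing buys only brevity, while yours supplies the step that actually makes the first inequality close.
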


\begin{proof}
    We obtain the first inequality by restricting the $\mathcal{K}(M \boxtimes N)$-regular inverse of $M \boxtimes N$ to $\mathbb{C}[G]$ and $\mathbb{C}[H]$ respectively. For the second, note that the external tensor product of a $\mathcal{K}(M)$-regular inverse and $\mathcal{K}(N)$-regular inverse yields a $\mathcal{K}(M) \mathcal{K}(N)$-regular inverse for $M \boxtimes N$.
\end{proof}

Our computations show that $\mathcal{K}(M \boxtimes N) = \mathcal{K}(M) \cdot \mathcal{K}(M)$ in all cases considered. For instance, $\mathbb{C}[SL_2(\mathbb{F}_5) \times M_{22}]$ has simple module of dimensions $84, 840$ and $924$ with Knutson Indices six. So we propose the following question.

\begin{question}
    Does it hold in general that $\mathcal{K}(M \boxtimes N) = \mathcal{K}(M) \cdot \mathcal{K}(M)$?
\end{question}

It is also possible to show that many families of group algebras are of Knutson type. For instance, let us consider the following examples.

\begin{example}
All finite dimensional commutative group algebras are of Knutson type.
\end{example}

\begin{example} \label{ex extraordinary}
All group algebra of an extraspecial group are of Knutson type.
\end{example} 

\begin{proof}
We can deduce from character tables of the extraspecial groups \cite{gorenstein} that their representation ring has the following structure. There are $p^{2n}$ one-dimensional modules $V_1, \ldots V_{p^{2n}}$ and $p-1$ $p^n$-dimensional simple modules $W_1, \ldots W_{p-1}$ with
$$ W_i \otimes V_j \cong W_i \quad \text{ and } \quad W_i \otimes W_j \cong p^n W_{k} $$
where $k \equiv i + j$ mod $p$ and $p^n W_0$ is the sum of all one-dimensional modules.

The Knutson Index of all one-dimensional modules is clearly one and for the $p^n$-dimensional ones, we have that 
$$ W_i \otimes (p^n V_1 \oplus W_1 \oplus \ldots \oplus W_{p-1}) \cong V_1 \oplus \ldots \oplus V_{p^{2n}} \oplus p^n (W_{1} \oplus \ldots \oplus W_{p-1}) $$
So we conclude that all simple modules of the group algebra of an extraspecial group have trivial Knutson Index and that these group algebras are of Knutson type.
\end{proof} 

\begin{example} \label{ex dih}
All group algebras of a dihedral group $\mathbb{C}[D_{2n}]$ are of Knutson type.
\end{example}

\begin{proof}
The character table of $D_{2n}$ with $n$ even is given by

\begin{table} [h]
$$
\begin{array}{c|ccc}
  \rm class & 1 & r^k & s r^k \cr
\hline
  \chi_1 & 1& 1 & 1 \cr
  \chi_2 & 1 & 1 & -1 \cr
  \chi_3 & 1 & (-1)^k & (-1)^k \cr
  \chi_4 & 1 & (-1)^k & (-1)^{k+1} \cr
  \rho_h & 2 & 2 \text{cos} ( \frac{2 h k \pi}{n} ) & 0
\end{array}
$$
where $1 \leq h \leq \frac{n}{2}-1$.
\caption{Character table of $D_{2n}$ for $n$ even \cite{serre1971representation}} 
\end{table}

We deduce that there are four one-dimensional modules $V_1, V_2, V_3, V_4$ and $\tfrac{n}{2} - 1$ two-dimensional simple modules $W_1, \ldots W_{\tfrac{n}{2}-1}$, where $V_1$ and $V_3$ act trivially on the set of two-dimensional modules and $V_2$ and $V_4$ act by mapping $W_h$ to $W_{\tfrac{n}{2} - h}$. In particular, all $V_i$'s act trivially on $W_{\tfrac{n}{4}}$. Let 
$$ V = \bigoplus_{i=1}^4 V_i \qquad \qquad W = \bigoplus_{i=1}^{\tfrac{n}{2}-1} W_i $$
If we cancel the previous from $W_h \otimes H \cong 2H$ and divide by two, we obtain that for $k \neq \tfrac{n}{4}$
$$ W_k \otimes W \cong V \oplus  W_k \oplus W_{\tfrac{n}{2}-k} \oplus_{i \in I} 2 W_i \qquad \text{and} \qquad W_{\tfrac{n}{4}} \otimes W \cong V \oplus_{j \in J} 2 W_j $$
where $I = \{1, \ldots, \tfrac{n}{2}-1\}^{\setminus \{k, \frac{n}{2}-k\}}$ and $J = \{1, \ldots, \tfrac{n}{2}-1\}^{\setminus \{\frac{n}{4}\}}$ 

So $V_1 \oplus V_2 \oplus W_1 \oplus \ldots \oplus W_{\tfrac{n}{2}-1}$ is a regular inverse for any $W_h$.

The case with $n$ odd is similar, but with only two one-dimensional modules.
\end{proof}

Our computations also show that the group algebras of a symmetric group $\mathbb{C}[S_n]$ for $n \leq 15$ are of Knutson type. The following question remains open.

\begin{question}
    Are all group algebras of a symmetric group $\mathbb{C}[S_n]$ of Knutson type?
\end{question}

We now introduce two algorithms that, as we show later, construct a $\mathcal{K}(M)$-regular inverse for a simple $\mathbb{C}[G]$-module $M$ via induction from the group algebras of the Sylow $p$-groups of $G$. This idea is similar to the one introduced by Savitskii \cite{savitskii} but for modules instead of for characters and with a different application. These algorithms are useful tools to compute the Knutson Indices of simple modules over more complicated group algebras $\mathbb{C}[G]$ as they reduce the problem to computing the Knutson Indices over the group algebras of the Sylow $p$-subgroups of $G$. 

\begin{definition}[Savitskii Restriction] \label{Savitskii Restriction}
Let $\mathbb{C}[G]$ be a group algebra where $G$ is a group of order $p_1^{k_1} \cdot \ldots \cdot p_m^{k_m}$ and $S_1, \ldots, S_m$ its Sylow $p$-subgroups. Let $M$ be an $r$-dimensional module over $\mathbb{C}[G]$ and define the Savitskii Restriction $M_i$ to each group algebra $\mathbb{C}[S_i]$ using the following algorithm.

Let $p_i^{l_i}$ be the greatest common divisor of $r$ and $p_i^{k_i}$. Then by Bezout's identity there exists integers $u_i$ and $v_i$ such that $u_i r + v_i p_i^{k_i} = p_i^{l_i}$. For simplicity, we will always choose $u_i$ minimal positive. The Savitskii Restriction of $M$ to $\mathbb{C}[S_i]$ is 
$$ M_i = u_i M|_{S_i} + v_i V_{S_i} $$
where  $M|_{S_i}$ is the usual restriction of $M$ to $\mathbb{C}[S_i]$ and $V_{S_i}$ is the regular module of $\mathbb{C}[S_i]$.
\end{definition}

\begin{definition} [Savitskii Induced Inverse] \label{savitskii induction}
Let $\mathbb{C}[G]$ be a group algebra where $G$ is a group of order $p_1^{k_1} \cdot \ldots \cdot p_m^{k_m}$ and $S_1, \ldots, S_m$ its Sylow $p$-subgroups. Let $M$ be an $r$-dimensional module over $\mathbb{C}[G]$ and $M_i$ its Savitskii Restriction to $\mathbb{C}[S_i]$. Denote by $N_i$ a $\mathcal{K}(M_i)$-regular inverse for $M_i$ and by $N_i^*$ be the induced module of $N_i$ to $\mathbb{C}[G]$. Then $\text{gcd}(\text{dim}(N_1^*), \ldots, \text{dim}(N_m^*)) = p_1^{l_1} \cdot \ldots \cdot p_m^{l_m}$ where $l_i \leq k_i$. The Savitskii Induced Inverse of $M$ is
$$ N = a_1 N_1^* + \ldots + a_m N_m^* $$
where $a_i \in \mathbb{Z}$ such that $p_1^{l_1} \cdot \ldots \cdot p_m^{l_m} = a_1 N_1^* + \ldots + a_m N_m^*$.
\end{definition}

The following result shows the Savitskii Induced Inverse of $M$ is a $\mathcal{K}(M)$-regular inverse of $M$ and that the Knutson Index of a module over a group algebra is the product of the Knutson Indices of its Savitskii Restrictions to the group algebras of all Sylow $p$-subgroups.

\begin{theorem} \label{thm Knutson Index vs reduction}
Let $M$ be a finite dimensional module over a finite dimensional group algebra $\mathbb{C}[G]$ and $N$ be the Savitskii Induced Inverse of $M$. Then 
$$ M \otimes N \cong \mathcal{K}(M) \ G \quad \text{and} \quad \mathcal{K}(M) = \prod_{i=1}^m \mathcal{K}(M_i) $$
where $M_1, \ldots, M_m$ are the Savitskii Restrictions of $M$.
\end{theorem}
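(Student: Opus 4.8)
The plan is to evaluate $M\otimes N_i^{*}$ one Sylow subgroup at a time, to check that each of these is already an integer multiple of the regular module $G$, and then to glue the contributions together using the Bézout coefficients $a_i$ of Definition~\ref{savitskii induction}; the minimality half of the Knutson Index is handled afterwards by a separate divisibility argument. The three standard facts I would rely on throughout are the projection formula $\mathrm{Ind}_{S_i}^{G}\!\big(M|_{S_i}\otimes N_i\big)\cong M\otimes \mathrm{Ind}_{S_i}^{G}(N_i)=M\otimes N_i^{*}$, the isomorphism $\mathrm{Ind}_{S_i}^{G}V_{S_i}\cong G$, and the absorption rule $V_{S_i}\otimes X\cong\dim(X)\,V_{S_i}$ valid for any $\mathbb{C}[S_i]$-module $X$.

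First I would compute the single-Sylow product. Tensoring the Savitskii Restriction $M_i=u_iM|_{S_i}+v_iV_{S_i}$ with its $\mathcal{K}(M_i)$-regular inverse $N_i$ and using the absorption rule gives $\mathcal{K}(M_i)\,V_{S_i}\cong M_i\otimes N_i = u_i\,(M|_{S_i}\otimes N_i)+v_i\dim(N_i)\,V_{S_i}$, whence $u_i\,(M|_{S_i}\otimes N_i)\cong\big(\mathcal{K}(M_i)-v_i\dim(N_i)\big)V_{S_i}$. Applying $\mathrm{Ind}_{S_i}^{G}$, invoking $\mathrm{Ind}_{S_i}^{G}V_{S_i}\cong G$ together with the projection formula, and cancelling the positive integer $u_i$ in the torsion-free representation ring, I would arrive at the clean identity $M\otimes N_i^{*}\cong \mathcal{K}(M_i)\,\tfrac{r}{p_i^{\,l_i}}\,G$, after simplifying the scalar with the Bézout relation $u_ir+v_ip_i^{k_i}=p_i^{\,l_i}$, the value $\dim(M_i)=p_i^{\,l_i}$, and $\dim(N_i)=\mathcal{K}(M_i)\,p_i^{\,k_i-l_i}$.

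Summing over $i$ then yields $M\otimes N=\sum_i a_i\,(M\otimes N_i^{*})=r\big(\sum_i a_i\,\mathcal{K}(M_i)/p_i^{\,l_i}\big)G$. Since $\dim(N_i^{*})=|G|\,\mathcal{K}(M_i)/p_i^{\,l_i}$ and the $a_i$ are chosen so that $\sum_i a_i\dim(N_i^{*})=\gcd_i\dim(N_i^{*})$, this scalar collapses to $\tfrac{r}{\gcd(r,|G|)}\prod_i\mathcal{K}(M_i)$. The crucial arithmetic input is that $\dim(M)=r$ divides $|G|$, which holds because $M$ is simple: then $\gcd(r,|G|)=r$, the spurious prefactor disappears, and we obtain $M\otimes N\cong\big(\prod_i\mathcal{K}(M_i)\big)G$. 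This proves the displayed isomorphism and already gives the bound $\mathcal{K}(M)\le\prod_i\mathcal{K}(M_i)$.

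The reverse inequality is the step I expect to be the main obstacle. Starting from an arbitrary regular inverse $N'$ with $M\otimes N'\cong\mathcal{K}(M)\,G$, I would restrict to $S_i$ using $G|_{S_i}\cong[G:S_i]\,V_{S_i}$ to get $M|_{S_i}\otimes N'|_{S_i}\cong\mathcal{K}(M)[G:S_i]\,V_{S_i}$, and then tensor $M_i$ with $N'|_{S_i}$ exactly as in the second step to exhibit $N'|_{S_i}$ as a regular inverse of $M_i$ with multiplicity $\mu_i=\mathcal{K}(M)\,[G:S_i]\,p_i^{\,l_i}/r$. Because the set of multiplicities $\{n:M_i\otimes N\cong nV_{S_i}\}$ is a subgroup of $\mathbb{Z}$, it equals $\mathcal{K}(M_i)\mathbb{Z}$, so $\mathcal{K}(M_i)\mid\mu_i$; a $p_i$-adic valuation count (again using $r\mid|G|$, so that $[G:S_i]$ and $r/p_i^{\,l_i}$ are coprime to $p_i$) reduces this to $v_{p_i}(\mathcal{K}(M_i))\le v_{p_i}(\mathcal{K}(M))$. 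By Proposition~\ref{prop divides} each $\mathcal{K}(M_i)$ divides $\dim(M_i)=p_i^{\,l_i}$ and is hence a power of the single prime $p_i$; as the $p_i$ are distinct, these valuation bounds multiply to $\prod_i\mathcal{K}(M_i)\mid\mathcal{K}(M)$. Combined with the upper bound this forces $\mathcal{K}(M)=\prod_i\mathcal{K}(M_i)$. The points needing the most care are the valuation bookkeeping in both bounds and the two cancellations inside the torsion-free representation ring.
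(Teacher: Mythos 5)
Your proof is correct, and it reaches both assertions by a genuinely different route from the paper. For the isomorphism $M\otimes N\cong\big(\prod_i\mathcal{K}(M_i)\big)G$, the paper works entirely with character values: it computes $(\chi\otimes\mu)(\mathrm{id})$ by bookkeeping of degrees and then shows $(\chi\otimes\mu)(g)=0$ for $g\neq\mathrm{id}$ by a case analysis on whether the order of $g$ is a prime power, using Mackey's formula for $\mu_i^*(g)$ together with the fact that $\mu_i$ vanishes wherever $\chi_i$ does not. Your projection-formula argument replaces all of this: transporting $M_i\otimes N_i\cong\mathcal{K}(M_i)\,V_{S_i}$ up to $G$ shows directly that each $M\otimes N_i^{*}$ is \emph{individually} an integer multiple of the regular module, with no pointwise evaluation and no case analysis on element orders, after which the B\'ezout gluing is pure arithmetic; what you lose is the explicit description of where the induced characters vanish, which the paper's method gives for free. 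For the reverse inequality the difference is more pronounced: the paper's contradiction argument is terse and notationally conflates the exponents of $\mathcal{K}(\chi)$ with the local indices $\mathcal{K}(\chi_i)$ before these have been shown to agree, whereas your restriction-plus-valuation argument --- the set of achievable multiplicities is the subgroup $\mathcal{K}(M_i)\mathbb{Z}$, each $\mathcal{K}(M_i)$ is a $p_i$-power by Proposition~\ref{prop divides}, and $v_{p_i}(\mu_i)=v_{p_i}(\mathcal{K}(M))$ --- is a clean and complete proof that $\prod_i\mathcal{K}(M_i)$ divides $\mathcal{K}(M)$. One caveat applies equally to both proofs: the theorem is phrased for an arbitrary finite-dimensional module, but the divisibility $\dim(M)\mid|G|$ that you correctly flag as the crucial arithmetic input (and that the paper invokes silently via ``the degree of an irreducible character divides the order of the group'') restricts the argument to simple modules, or at least to modules whose dimension divides the group order.
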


\begin{proof}
Let $\chi$ and $\mu$ be the characters corresponding to $M$ and its Savitskii Induced Inverse respectively and $\chi_1, \ldots \chi_m$ the characters of the Savitskii Restrictions. We first claim that
$$ \chi \otimes \mu = \prod_{i = 1}^k \mathcal{K}(\chi_i) \ \rho_\text{reg} $$

The degree of an irreducible character divides the order of the group and $\mathcal{K}(\chi)$ divides the degree of the character by Proposition \ref{prop divides}. So we have that
$$ |G|= \prod_{i=1}^{k} p_i^{l_i} \qquad \qquad \chi(\text{id}) = \prod_{i=1}^{k} p_i^{m_i} \qquad \qquad \mathcal{K}(\chi) = \prod_{i=1}^{k} p_i^{n_i} $$
where $n_i \leq m_i \leq l_i$. It follows from the definitions of Sylow $p$-subgroups and Savitskii Restrictions that
$$ |S_i| = p_i^{l_i} \qquad \qquad \chi_i(\text{id}) = p_i^{m_i} $$
Let $\mu_i$ be a $\mathcal{K}(\chi_i)$-regular inverse of $S_i$. Then $\mu_i(\text{id}) = p_i^{r_i}$ where $r_i = n_i + l_i - m_i \leq l_i$. So
$$ \mu_i^*(\text{id}) = p_i^{r_i} \cdot \prod_{\begin{subarray}{c} j = 1 \\ j \neq i \end{subarray}}^{k} p_j^{l_j} \quad \implies \quad \text{gcd}(\mu_1^*(\text{id}), \ldots \mu_k^*(\text{id})) = \prod_{i = 1}^k p_i^{r_i} $$ 
Let $a_i \in \mathbb{Z}$ such that 
$$ \prod_{i=1}^k p_i^{r_i} = \sum_{i=1}^k a_i \mu_i^*(\text{id}) \quad \text{and} \quad \mu = \sum_{i=1}^k a_i \mu_i^* \ \implies \ \mu(\text{id}) = \prod_{i=1}^k p_i^{r_i} $$
So we have that
$$ (\chi \otimes \mu)(\text{id}) = \chi(\text{id}) \mu(\text{id}) = \prod_{i=1}^k p_i^{m_i} \cdot \prod_{i=1}^k p_i^{r_i} = \prod_{i=1}^k p_i^{n_i + l_i} = \prod_{i=1}^k \mathcal{K}(\chi_i) \ |G| $$
It remains to show that $(\chi \otimes \mu)(g) = 0 \ \forall g \in G \setminus \{\text{id}\}$. We show that if $\chi(g) \neq 0$ for $g \neq (\text{id})$ then $\mu(g) = 0$. If the order of $g$ is not a prime power then $\mu_i^*(g) = 0$ for all $i$ by definition of usual induction of characters and therefore $\mu(g) = 0$. If the order of $g$ is $p_i^b$ then $\mu_j^*(g) = 0$ for $j \neq i$ so it suffices to show that $\mu_i^*(g) = 0$. By Mackey's formula, we have that
$$ \mu_i^*(g) = \sum_{\begin{subarray}{c} S_i h \in G \setminus S_i \\ h g h^{-1} \in S_i \end{subarray}} \mu_i (h g h^{-1}) $$
As $\chi(g) \neq 0$ we have that $\forall h \in G$,
$$ \chi(h g h^{-1}) \neq 0 \quad \implies \quad \chi_i(h g h^{-1}) \neq 0 \quad \implies \quad \mu_i(h g h^{-1}) = 0 $$
where the last implication follows from $\mu_i \otimes \chi_i = \mathcal{K}(\chi_i) \ \rho_{\text{reg}} $. Hence $\mu_i^*(g) = 0$ and we have shown that 
$$ \chi \otimes \mu = \prod_{i = 1}^k \mathcal{K}(\chi_i) \ \rho_\text{reg} $$
So in particular we have that
$$ \mathcal{K}(\chi) \leq \prod_{i = 1}^k \mathcal{K}(\chi_i) $$
If the inequality is strict then
$$ \mathcal{K}(\chi) = \prod_{i=1}^k p_i^{n_i} < \prod_{i=1}^k p_i^{r_i} = \prod_{i=1}^k \mathcal{K}(\chi_i) \quad \implies \quad n_i < r_i \text{ for some } i $$
So restricting to $\mathbb{C}[S_i]$ yields that $\chi_i \otimes \mu_i = p_i^{n_i} \ \rho_\text{reg}$ and this contradicts the assumptions $\mathcal{K}(\chi_i) = p_i^{r_i}$. So we conclude that the Knutson Index of an irreducible character is the product of the Knutson Indices of its Savitskii Restrictions to all Sylow $p$-subgroups.
\end{proof}

From this theorem, we obtain the following three corollaries, including Savitskii's Criterion. 

\begin{corollary} \label{cor maximal}
Let $\mathbb{C}[G]$ be a finite dimensional group and $p^a$ the highest prime power that divides the dimension of a simple $\mathbb{C}[G]$-module $M$. If $p^a$ is also the highest power of $p$ that divides the order of the group $G$, then $p$ does not divide the Knutson Index of $M$.
\end{corollary}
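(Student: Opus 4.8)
The plan is to reduce the whole statement to the single Sylow subgroup $S_i$ attached to $p$ and to recognise that the dimensional hypothesis forces $\chi = \chi_M$ to be a character of $p$-defect zero. Write $p = p_i$, so $|S_i| = p^{k_i}$, and set $r = \dim(M)$. Since the degree of an irreducible character divides $|G|$, the exponent of $p$ in $r$ satisfies $\mathrm{val}_p(r) \le \mathrm{val}_p(|G|) = k_i = a$; combined with the hypothesis $p^a \mid r$ this gives $\mathrm{val}_p(r) = a$ exactly. Hence $|S_i| = p^a$ divides $r$ with quotient $t = r/p^a$ coprime to $p$, and $\chi$ has $p$-defect zero.

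The key step is to invoke the classical theorem of Brauer and Nesbitt: an irreducible complex character of $p$-defect zero vanishes on every $p$-singular element. Every non-identity element of $S_i$ has order a power of $p$ and is therefore $p$-singular, so $\chi(g) = 0$ for all $g \in S_i \setminus \{\text{id}\}$. Consequently $\chi|_{S_i}$ takes the value $r$ at the identity and $0$ elsewhere, that is $M|_{S_i} \cong t\, V_{S_i}$, a multiple of the regular module $V_{S_i}$ of $\mathbb{C}[S_i]$.

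I then read off the Savitskii Restriction. Here $p_i^{l_i} = \gcd(r, p^{k_i}) = p^a = |S_i|$, so $l_i = a$, and any Bézout pair satisfies $u_i r + v_i p^a = p^a$, equivalently $u_i t + v_i = 1$. Substituting $M|_{S_i} = t\, V_{S_i}$ gives
$$ M_i = u_i M|_{S_i} + v_i V_{S_i} = (u_i t + v_i)\, V_{S_i} = V_{S_i}, $$
independently of the choice of coefficients. Thus the Savitskii Restriction of $M$ to $\mathbb{C}[S_i]$ is exactly the regular module, whose Knutson Index is $1$ (the trivial module is a $1$-regular inverse).

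Finally I feed this into Theorem \ref{thm Knutson Index vs reduction}, which gives $\mathcal{K}(M) = \prod_{j=1}^m \mathcal{K}(M_j)$. For each $j$ the dimension $\dim(M_j) = \gcd(r, p_j^{k_j})$ is a power of the distinct prime $p_j$, so $\mathcal{K}(M_j)$ divides a power of $p_j$ by Proposition \ref{prop divides}; hence the $p$-part of $\mathcal{K}(M)$ is contributed solely by $\mathcal{K}(M_i) = 1$, and $p \nmid \mathcal{K}(M)$. The one genuinely non-elementary ingredient, and the step I expect to be the main obstacle, is identifying the hypothesis with $p$-defect zero and applying the vanishing of such characters on $p$-singular elements; once that is in hand, the remainder is routine bookkeeping with the Bézout coefficients and the product formula of the theorem.
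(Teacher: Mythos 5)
Your proof is correct and takes essentially the same route as the paper: reduce via Theorem~\ref{thm Knutson Index vs reduction} to the Savitskii Restrictions, show the restriction at the prime $p$ is the regular module of the Sylow subgroup (hence of trivial Knutson Index), and note the remaining restrictions have dimension a power of a prime other than $p$, so Proposition~\ref{prop divides} keeps $p$ out of their indices. The only difference is that the paper merely asserts the restriction to the Sylow $p$-subgroup is the regular module, whereas you actually justify it via the Brauer--Nesbitt vanishing of defect-zero characters on $p$-singular elements together with the B\'ezout identity $u_it+v_i=1$ --- a step the paper leaves implicit and which your write-up correctly identifies as the one non-trivial ingredient.
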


\begin{proof}
The Savitskii Restriction of $M$ to the group algebra $\mathbb{C}[S]$, where $S$ is a Sylow $p$-subgroup of $G$, yields the regular module of $\mathbb{C}[S]$ and has therefore trivial Knutson Index. For all other restrictions, $p$ does not divide the order of the Sylow subgroups, so by Proposition \ref{prop divides}, we know that $p$ does not divide their Knutson Indices. We conclude by the previous theorem that the Knutson Index of $M$ is not divisible by $p$.
\end{proof}

\begin{corollary} \label{cor distinct primes}
If the order of a group $G$ is a product of distinct primes, then the group algebra $\mathbb{C}[G]$ is of Knutson type. 
\end{corollary}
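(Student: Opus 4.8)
The plan is to deduce the statement directly from Proposition~\ref{prop divides} and Corollary~\ref{cor maximal}, exploiting that the order of $G$ is squarefree. Write $|G| = p_1 \cdots p_m$ with the $p_i$ distinct primes, so that every Sylow subgroup of $G$ has prime order. By the definition of Knutson type it suffices to prove that an arbitrary simple $\mathbb{C}[G]$-module $M$ satisfies $\mathcal{K}(M) = 1$.

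First I would record the two numerical constraints on $M$. Its dimension $r = \dim M$ divides $|G|$, and since $|G|$ is squarefree so is $r$; in particular every prime divides $r$ at most once. By Proposition~\ref{prop divides}, $\mathcal{K}(M)$ divides $r$, hence $\mathcal{K}(M)$ is itself squarefree. The argument then proceeds by ruling out every prime divisor: suppose some prime $p \mid \mathcal{K}(M)$. Then $p \mid r$, and because both $r$ and $|G|$ are squarefree the exact power of $p$ dividing $r$ is $p^1$, which coincides with the exact power of $p$ dividing $|G|$. This is precisely the hypothesis of Corollary~\ref{cor maximal}, which then yields $p \nmid \mathcal{K}(M)$, a contradiction. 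Therefore $\mathcal{K}(M)$ has no prime divisors and equals $1$.

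The only point that needs care, and what I expect to be the crux, is checking that squarefreeness forces the $p$-part of $\dim M$ to match the $p$-part of $|G|$ for every prime dividing the index; this alignment is exactly the condition under which the Savitskii Restriction of $M$ to a Sylow $p$-subgroup becomes regular in Corollary~\ref{cor maximal}. I would also note an alternative route through Theorem~\ref{thm Knutson Index vs reduction} directly: each Sylow subgroup $S_i$ has prime order, so $\mathbb{C}[S_i]$ is commutative and hence of Knutson type, and one would then try to invoke $\mathcal{K}(M) = \prod_i \mathcal{K}(M_i)$. The obstacle on this path is that the Savitskii Restriction $M_i$ is a \emph{virtual} module rather than a simple one; indeed a general non-simple module over $\mathbb{C}[C_p]$ can have index larger than $1$, and Proposition~\ref{prop divides} is stated for genuine modules. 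One would therefore have to use the specific form $M_i = u_i M|_{S_i} + v_i V_{S_i}$ together with its virtual dimension (which works out to $1$ when $p_i \nmid r$ and to $p_i$ when $p_i \mid r$) to pin the index down. Routing the proof through the divisibility corollaries sidesteps this computation entirely, which is why I would prefer the first approach.
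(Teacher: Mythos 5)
Your primary argument is correct, and it routes the proof differently from the paper. The paper's proof is in fact your ``alternative route'': it applies Theorem~\ref{thm Knutson Index vs reduction} directly, observing that each Sylow subgroup $S_i$ has prime order, so the Savitskii Restriction $M_i$ has virtual dimension $\gcd(\dim M, p_i)$, which is $p_i$ (giving the regular module of $\mathbb{C}[S_i]$) when $p_i \mid \dim M$ and $1$ (a one-dimensional module) otherwise; in both cases $\mathcal{K}(M_i)=1$ and the product formula finishes. Your preferred route instead combines Proposition~\ref{prop divides} with Corollary~\ref{cor maximal}: since $\dim M$ divides the squarefree number $|G|$, every prime $p$ dividing $\mathcal{K}(M)$ divides $\dim M$ exactly once, which matches the exact $p$-part of $|G|$, so Corollary~\ref{cor maximal} excludes $p$ --- a clean contradiction argument that needs the classical divisibility $\dim M \mid |G|$ as an extra input. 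The two proofs rest on the same underlying machinery (Corollary~\ref{cor maximal} is itself a consequence of Theorem~\ref{thm Knutson Index vs reduction}), so neither is more general, but yours avoids having to identify the Savitskii Restrictions explicitly, while the paper's is more self-contained at this point in the text. Your caveat about the restrictions being virtual rather than genuine modules is a fair observation --- the paper glosses over it by asserting the restriction \emph{is} the regular module, which is exactly the dimension computation you describe --- but it does not affect the validity of either argument here.
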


\begin{proof}
Let $M$ be a simple $\mathbb{C}[G]$-module and $S$ is a Sylow $p$-subgroup of $G$. The Savitskii Restriction of $M$ to $\mathbb{C}[S]$ is the regular module if $p$ divides $\text{dim}(M)$ and a one-dimensional module otherwise. In both cases, the Knutson Index is trivial and it follows from Theorem \ref{thm Knutson Index vs reduction} that $\mathcal{K}(M) = 1$.
\end{proof}

\begin{corollary} [Savitskii's Criterion] \label{cor Savitskii criterion}
\ \\
The group algebra $\mathbb{C}[{\rm SL}_2(\mathbb{F}_q)]$ is of Knutson type if and only if $q=2^m$ or $q=3$. \\
The group algebra ${\mathbb{C}[\rm PSL}_2(\mathbb{F}_q)]$ is of Knutson type if and only if $q=2^m$ or $q=2^m \pm 1$.
\end{corollary}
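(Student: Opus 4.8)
The plan is to feed everything into Theorem \ref{thm Knutson Index vs reduction}, so that being of Knutson type is decided locally at each Sylow subgroup. First I would record the Sylow structure of $G={\rm SL}_2(\mathbb{F}_q)$ and ${\rm PSL}_2(\mathbb{F}_q)$: for an odd prime $\ell$ different from the defining characteristic $p$ the Sylow $\ell$-subgroup is cyclic; the Sylow $p$-subgroup is the unipotent subgroup, elementary abelian of order $q$; and for odd $q$ the Sylow $2$-subgroup $S$ is generalized quaternion $Q_{2^n}$ in the ${\rm SL}_2$ case and dihedral $D_{2^{n-1}}$ in the ${\rm PSL}_2$ case, where $2^n$ is the $2$-part of $q^2-1$. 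The second ingredient is that any virtual module $X$ over a group algebra satisfies $X\otimes\rho_\text{reg}\cong\dim(X)\,\rho_\text{reg}$, so a Savitskii Restriction of dimension $1$ (with regular inverse $\rho_\text{reg}$) and one equal to the regular module both have trivial Knutson Index. Since every simple module of $G$ has dimension in $\{1,q,q\pm1,(q\pm1)/2\}$, for each odd $\ell$ and for $p$ the $\ell$-part of the dimension is either $1$ or the full $\ell$-part of $|G|$; hence by Definition \ref{Savitskii Restriction} the corresponding Savitskii Restriction is $1$-dimensional or regular, contributing a factor $1$ to the product in Theorem \ref{thm Knutson Index vs reduction}. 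This settles the case $q=2^m$ (where $2=p$) and reduces the odd-$q$ analysis entirely to the prime $2$.

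The crux is therefore the Knutson Index over $S$. Here I would prove the key lemma that, for a virtual $S$-module $X$ whose dimension is a power of $2$, one has $\mathcal{K}(X)=2^{v}$, where $2^{v}$ is the largest power of $2$ dividing $X$ in the representation ring $R(S)$. The lower bound is immediate: writing $X\in 2^{v}R(S)$, the principal ideal $(X)$ lies in $2^{v}R(S)$, whereas $\rho_\text{reg}$ contains the trivial module with unit multiplicity, so the trivial-coefficient of $n\,\rho_\text{reg}\in(X)$ forces $2^{v}\mid n$ and hence $\mathcal{K}(X)\geq 2^{v}$. The content is the matching upper bound, namely that a virtual $S$-module of $2$-power dimension that is \emph{not} divisible by $2$ in $R(S)$ is regular invertible; I would establish this for the generalized quaternion and dihedral groups from their explicit tensor structure (as in Examples \ref{ex extraordinary} and \ref{ex dih}), building the regular inverse by hand and then multiplying through by $2^{v}$.

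With the lemma in hand the criterion reduces to computing the $2$-adic valuation $v_2(M_2)$ of the Savitskii Restriction to $S$, which I would extract from the central character and the restriction of the discrete and principal series to $S$. For ${\rm SL}_2(\mathbb{F}_q)$ the relevant invariant is the action of the central involution $z=-I$: a simple module on which $z$ acts by $-1$ restricts to $Q_{2^n}$ as a sum of its faithful two-dimensional irreducibles, and for $q\geq 5$ the faithful module whose dimension carries the larger $2$-part (one of $q\pm1$) has a Savitskii Restriction lying in $2R(S)$, hence index $2$; for $q=3$ the principal series of dimension $q+1$ is absent and the only faithful modules are two-dimensional, restricting to a single irreducible and so having index $1$. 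Combined with the defining-characteristic case this yields the first statement. For ${\rm PSL}_2(\mathbb{F}_q)$ the central involution acts trivially on every module, and the obstruction instead comes from a simple module whose dimension has intermediate $2$-part and whose restriction to the dihedral $S$ lands in $2R(S)$. The arithmetic input is that $q=2^m\pm1$ is equivalent to one of $q\pm1$ being a pure power of $2$, and I would show this is precisely the configuration in which no simple module produces such an intermediate, even restriction — the relevant dimensions becoming pure $2$-powers whose restrictions one checks avoid $2R(S)$ — so that $\mathbb{C}[{\rm PSL}_2(\mathbb{F}_q)]$ is of Knutson type, while for any other odd $q$ such a module exists and forces index $2$.

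I expect two steps to be the main obstacles. The first is the upper bound in the key lemma: that a virtual module of $2$-power dimension not divisible by $2$ in $R(S)$ is regular invertible over a generalized quaternion or dihedral group. This is the only non-formal ingredient and must be proved from the explicit tensor structure of these groups; everything else is either linear algebra over $\mathbb{Z}$ or the mod-$2$ divisibility argument. The second, and more laborious, obstacle is the bookkeeping that computes $v_2(M_2)$ from the character table of ${\rm SL}_2(\mathbb{F}_q)$: one must read off the values of the discrete and principal series on the $2$-elements of $S$, track which series descend to ${\rm PSL}_2$ via their central characters, and separate the cases $q\equiv 1$ and $q\equiv 3\pmod 4$ together with the small case $q=3$, so as to match the parity of the restriction multiplicities exactly to the conditions $q=3$ and $q=2^m\pm1$.
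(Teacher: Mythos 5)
Your global architecture is the same as the paper's: reduce via Theorem \ref{thm Knutson Index vs reduction} to the Sylow subgroups, observe that every odd prime and the defining characteristic contribute trivially because the $\ell$-part of each simple dimension is either $1$ or maximal (this is exactly Corollary \ref{cor maximal}), and concentrate everything at the Sylow $2$-subgroup $Q_{2^n}$ resp.\ $D_{2^{n-1}}$. The gap is in the step you yourself flag as ``the content'': your key lemma is \emph{false} in the generality you state it. Take $S=D_8$ with one-dimensional modules $\lambda_1,\dots,\lambda_4$ and two-dimensional module $W$, and let $X=4\lambda_1-W$. Then $\dim X=2$ and $X\notin 2R(S)$, yet $X\otimes Y=\rho_{\mathrm{reg}}$ forces $4a_i-b=1$ for all $i$ and $4b-\sum a_i=2$ (writing $Y=\sum a_i\lambda_i+bW$), which gives $b=1$ together with $b\equiv 3\pmod 4$ --- no integer solution, so $\mathcal{K}(X)=2$ while your lemma predicts $1$. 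Moreover $X=M|_S-\rho_{\mathrm{reg}}$ for a suitable $10$-dimensional $M|_S$, i.e.\ virtual modules of exactly the Savitskii-Restriction shape $uM|_S+v\rho_{\mathrm{reg}}$ can violate the claimed upper bound. So ``not divisible by $2$ in $R(S)$'' is not the right invariant, and the plan to prove the upper bound ``from the explicit tensor structure'' cannot succeed as stated.

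What the paper uses instead is an invariant attached to character values rather than to divisibility in $R(S)$: for $Q_{2^k}$ the relevant restrictions are traceless everywhere except at the identity and the unique involution $z$, and invertibility is decided by whether a virtual module of the complementary dimension can be traceless at $z$ (every virtual character of $Q_{2^k}$ satisfies $\psi(z)\equiv\psi(1)\bmod 4$, which kills the $2$-dimensional candidate and forces index $2$ for the $(q\mp 1)$-dimensional series); for $D_{2^{k}}$ a two-dimensional restriction is regular invertible if and only if it is traceless at some nonidentity element, which is where the condition $q=2^m\pm 1$ enters. Your lower-bound argument (the coefficient of the trivial module in $n\rho_{\mathrm{reg}}$) is fine and your Sylow reduction is fine, but the local analysis at $2$ needs to be redone with a correct criterion; as written, the proof would break exactly at the point you identified as the main obstacle.
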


\begin{proof}
For $\mathbb{C}[{\rm SL}_2(\mathbb{F}_3)]$, it is straightforward to construct a regular inverse for all simple modules. Note that if $q=2^m$ then $\mathbb{C}[{\rm SL}_2(\mathbb{F}_q)] = \mathbb{C}[{\rm PSL}_2(\mathbb{F}_q)]$ and we will show now that all their simple modules are regular invertible. The dimension of the group algebra is $(q-1) q (q+1)$ and of the simple modules $1$, $q-1$, $q$ and $q+1$ \cite{dornhoff1971group}. As no odd prime divides both $q-1$ and $q+1$, we see that any odd prime dividing the dimension of the module is maximal, so no odd prime divides their Knutson Indices by Corollary \ref{cor maximal}. As $q-1$ and $q+1$ are odd, the modules of these dimensions must have trivial Knutson Index by Proposition \ref{prop divides}. Also, $q$ is the highest power of $2$ dividing the order of the dimension of the algebra. Therefore the $q$-dimensional module also has trivial Knutson Index by Corollary \ref{cor maximal}. Hence all simple modules have trivial Knutson Index. 

Let us now show that $\mathbb{C}[{\rm SL}_2(\mathbb{F}_q)]$ with $q \geq 5$ odd is not of Knutson type. The following table contains the dimensions and possible Knutson Indices of its simple modules depending on the congruence class of $q$ modulo $4$.

\newpage
\begin{table}[h]
$$
\begin{array}{c|c|c}
  \rm Dimension & \text{Knutson Index if } q \equiv 1 \text{ mod } 4 & \text{Knutson Index if } q \equiv 3 \text{ mod } 4 \cr
\hline
  1 & 1 & 1 \cr
  \frac{1}{2}(q-1) & 1 & 1 \cr
  \frac{1}{2}(q+1) & 1 & 1 \cr
  q - 1 & 2 & 1 \text{ or } 2 \cr
  q & 1 & 1 \cr
  q + 1 & 1 \text{ or } 2 & 2 \cr
\end{array}
$$
\caption{Knutson Indices of simple modules of $\mathbb{C}[{\rm SL}_2(\mathbb{F}_q)]$ with $q \geq 5$ odd}
\end{table}

We can obtain these indices in the following way. The dimension of the group algebra is again $(q+1) q (q-1)$ and the simple modules are of dimension $1$, $\tfrac{q-1}{2}$, $\tfrac{q+1}{2}$, $q-1$, $q$ or $q+1$ \cite{dornhoff1971group}. By similar arguments as for $q$ even, no odd prime divides their Knutson Index. So the odd-dimensional modules have trivial Knutson Index and the Knutson Index of the even-dimensional modules is a power of $2$. So by Theorem \ref{thm Knutson Index vs reduction}, the Knutson Index of a simple module over $\mathbb{C}[{\rm SL}_2(\mathbb{F}_q)]$ equals the Knutson Index of its Savitskii Restriction to the Sylow $2$-subgroup. 

To compute the Knutson Index of the remaining modules we will use Savitskii Restriction to the Sylow $2$-subgroup of ${\rm SL}_2(\mathbb{F}_q)$ for odd $q$. This is a generalised quaternion group \cite{huppert} and it can be shown via Clifford Theory that $\mathbb{C}[Q_{2^k}]$ and $\mathbb{C}[D_{2^k}]$ have isomorphic representation rings \cite{feit}. So it follows from the character table of $D_{2n}$ in Example \ref{ex dih} that $\mathbb{C}[Q_{2^k}]$ has four one-dimensional modules and $2^{k-2}-1$ two-dimensional simple modules. As the index depends on the congruence of $q$ modulo $4$, we will only consider the case $q \equiv 1$ mod $4$ here. The other case is analogous, with the $q-1$ and $q+1$ dimensional modules swapping roles. 

If $q \equiv 1$ mod $4$ then $\frac{1}{2}(q+1)$ is odd-dimensional so the modules of that dimension have trivial Knutson Index. Let us now consider the $\frac{1}{2}(q-1)$ dimensional modules. If $2^k$ is the highest power of $2$ diving the order of the group then ${\rm Q}_{2^k}$ is the Sylow $2$-subgroup of ${\rm SL}_2(\mathbb{F}_q)$ and the highest power diving of $2$ dividing $\frac{q-1}{2}$ is $2^{k-2}$. So applying Savitskii Restriction yields a $2^{k-2}$ dimensional module. This module acts traceless on all elements except on the identity and the unique element of order $2$. So to find a regular inverse in ${\rm Q}_{2^k}$ we just need a $4$-dimensional module acting traceless on this element order $2$. In the isomorphism between the character tables of $D_{2^k}$ and $Q_{2^k}$ this element corresponds to the rotation of order two. We see from the character table of $D_{2n}$ with $n$ even in Example \ref{ex dih} that the irreducible character of degree one evaluated at this element are $1$ and the ones of degree two are $\pm 2$. So it follows that there is a $4$-dimensional $\mathbb{C}[Q_{2^k}]$-module acting traceless on the element of order $2$.

Next, we show that all $q-1$ dimensional modules have Knutson Index $2$. Savitskii Restriction to $Q_{2^k}$ gives us a module of dimension $2^{k-1}$ and this acts again traceless on all elements except the identity and the unique element of order $2$. Using the same regular inverse as before, we see that Knutson Index is at most $2$. We know that the Knutson Index is non-trivial because no $2$-dimensional virtual module of $\mathbb{C}[Q_{2^k}]$ acts traceless on the element of order $2$.

The remaining modules are of dimension $q+1$. As $q + 1 \equiv 2$ mod $4$ the Savitskii restriction to the Sylow $2$-subgroup is two-dimensional and its Knutson Index $1$ or $2$. As we will show later for $PSL_2(q)$, if $q = 2^m +1$ then all have Knutson Index one and if $q \neq 2^m +1$ then same have Knutson Index one and others two.

Let us now compute the Knutson Indices of the simple modules over $\mathbb{C}[{\rm PSL}_2(\mathbb{F}_q)]$ for odd $q$. We obtain the following table. 

\begin{table} [h]
$$
\begin{array}{c|c|c}
  \rm Dimension & \text{Knutson Index if } q \equiv 1 \text{ mod } 4 & \text{Knutson Index if } q \equiv 3 \text{ mod } 4 \cr
\hline
  1 & 1 & 1 \cr
  \frac{1}{2}(q-1) & - & 1 \cr
  \frac{1}{2}(q+1) & 1 & - \cr
  q - 1 & 1 & 1 \text{ or } 2 \cr
  q & 1 & 1 \cr
  q + 1 & 1 \text{ or } 2 & 1 \cr
\end{array}
$$
\caption{Knutson Indices of simple modules of $\mathbb{C}[{\rm PSL}_2(\mathbb{F}_q)]$ with $q \geq 5$ odd}
\end{table}

For the modules of dimension $1, \frac{q-1}{2}, \frac{q+1}{2}, q$ we obtain that the Knutson Index is one because they are of odd dimension. For $q \equiv 1$ mod $4$, note that the highest power of two dividing $q-1$ equals the highest power of two dividing the order of $PSL_2(q)$ so by Corollary \ref{cor maximal} we have that the Knutson Index is one. Similarly, if $q \equiv 3$ mod $4$, then the Knutson Index of the modules of dimension $q+1$ is one.

We now show that if $q \equiv 1$ mod $4$, then if $q = 2^m + 1$, all $q+1$ dimensional modules have trivial Knutson Index and if $q \neq 2^m+1$ then some will have Knutson Index one and others two. The Savitskii Restrictions of these modules yield a two-dimensional module of $\mathbb{C}[D_{2^k}]$ and these modules are regular invertible if and only if they act traceless on at least one element. To see this, note that if it does not act traceless on any element then the regular inverse must act traceless on all non-trivial elements. So it must be an integer multiple of the regular modules and can not be of the required dimension. Conversely, suppose that it acts traceless on an element $g$. We can then construct a regular inverse for that module by taking the direct sum of all simple modules acting with positive trace on $g$ times their dimension and all simple modules acting traceless on $g$ times half their dimension. We conclude by noticing that in the character table of $SL_2(q)$ all $q+1$ dimensional representations act traceless on a $2$-element if and only $q = 2^m + 1$. If $q \equiv 3$ mod $4$ then the same arguments work for the modules of dimension $q-1$.
\end{proof}

\subsection{Semisimple Hopf algebras}

We now consider the more general case where $H$ is a finite dimensional semisimple Hopf algebra over a fixed algebraically closed field $k$ of characteristic zero. Let us start by recalling some results about these Hopf algebras and considering some examples.

\begin{proposition} \label{prop fd Hopf algebra} \
\begin{enumerate} [a)]
    \item $H$ is cocommutative if and only if $H \cong k[G]$ for a finite group $G$.
    \item $H$ is commutative if and only if $H \cong k[G]^*$ for a finite group $G$.
    \item $H$ is commutative and cocommutative if and only if $H \cong k[G]$ for an abelian finite group $G$.
\end{enumerate}
\end{proposition}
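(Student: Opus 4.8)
The plan is to prove part (a) first and then deduce (b) and (c) by Hopf duality. The three ``if'' directions are immediate and can be disposed of at the outset: in $k[G]$ the comultiplication $\Delta(g) = g \otimes g$ is symmetric, so $k[G]$ is cocommutative; dually, the pointwise multiplication on $k[G]^*$ is commutative; and $k[G]$ is commutative exactly when $G$ is abelian, which also gives the ``if'' direction of (c). So throughout the rest of the argument I only treat the ``only if'' directions.

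For the ``only if'' direction of (a), suppose $H$ is cocommutative. The one substantial external input is the Larson--Radford theorem: a finite dimensional semisimple Hopf algebra over a field of characteristic zero is also cosemisimple, so its dual $H^*$ is semisimple. Since $H$ is cocommutative, $H^*$ is commutative. A finite dimensional commutative semisimple algebra over the algebraically closed field $k$ is isomorphic to $k^n$ with $n = \dim H^* = \dim H$, and hence admits exactly $n$ distinct algebra homomorphisms to $k$, namely the projections onto the factors. These algebra homomorphisms $H^* \to k$ are precisely the group-like elements of $H^{**} = H$: an element $x \in H$ induces the evaluation $f \mapsto f(x)$, which is multiplicative and unital on $H^*$ if and only if $\Delta(x) = x \otimes x$ and $\epsilon(x) = 1$. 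Therefore $H$ has exactly $\dim H$ group-like elements.

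Since distinct group-like elements are linearly independent, the set $G$ of group-likes of $H$ is then a $k$-basis of $H$. Moreover $G$ is closed under multiplication, as $\Delta(xy) = \Delta(x)\Delta(y) = xy \otimes xy$, and under inversion, since the antipode sends a group-like to its inverse; thus $G$ is a finite group. The identities $\Delta(g) = g \otimes g$, $\epsilon(g) = 1$ and $S(g) = g^{-1}$ then show that the induced linear isomorphism $k[G] \to H$ is an isomorphism of Hopf algebras, proving (a). Parts (b) and (c) follow formally. For (b), if $H$ is commutative then $H^*$ is cocommutative, and $H^*$ is again a finite dimensional semisimple Hopf algebra by Larson--Radford applied to $H$, so part (a) gives $H^* \cong k[G]$ and dualizing yields $H \cong H^{**} \cong k[G]^*$. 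For (c), part (a) gives $H \cong k[G]$, which is commutative if and only if $G$ is abelian.

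I expect the only genuine obstacle to be the appeal to Larson--Radford, i.e.\ the implication that semisimplicity forces cosemisimplicity in characteristic zero; granting this, the remainder is elementary linear algebra together with the standard linear independence of group-like elements and the self-duality $H^{**} \cong H$ in the finite dimensional setting.
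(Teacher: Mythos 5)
Your proof is correct. Note, however, that the paper states this proposition without any proof, treating it as a recollection of standard structure theory for finite dimensional semisimple Hopf algebras over an algebraically closed field of characteristic zero, so there is no argument of the author's to compare yours against; your route (Larson--Radford to get cosemisimplicity, Artin--Wedderburn to identify the commutative dual with $k^n$, and the identification of algebra maps $H^* \to k$ with linearly independent group-likes of $H$) is the standard one and is complete, with (b) and (c) correctly deduced from (a) by duality.
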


As all finite dimensional commutative Hopf algebras are of Knutson type, we will be mainly interested in the non-commutative case. By the previous proposition, this means that we only need to consider group algebras of non-abelian groups and non-commutative and non-cocommutative Hopf algebras. It follows by our computations in the previous subsection that a semisimple Hopf algebra of dimension up to $72$ that is not Knutson type must be a non-commutative and non-cocommutative Hopf algebra.

It is known that semisimple Hopf algebras of dimension $p$ \cite{zhup} or $p^2$ \cite{masuokap2} where $p$ is prime are commutative group algebras. For dimension $p q$ where $p$ and $q$ are distinct primes, we have that it is a group algebra or the dual of a group algebra \cite{etingofpq}, where the former case is of Knutson type by Corollary \ref{cor distinct primes}.

\begin{example} \label{ex p p2 pq}
All semisimple Hopf algebras over $k$ of dimension $p$, $p^2$ or $p q$ where $p$ and $q$ are distinct primes are of Knutson type.
\end{example} 

Recall that a non-zero element $g \in H$ is called group-like if $\Delta(g) = g \otimes g$. The set of group-like elements $G(H)$ forms a finite group and its order divides the dimension of $H$ by the Nichols-Zoeller Theorem \cite{NZ}. Furthermore, $G(H^{*})$ is isomorphic to the group of one-dimensional modules of $H$ under the tensor product and acts on the set of simple modules of a fixed dimension. 

\begin{lemma}
If $W_1$ and $W_2$ are simple modules over $H$ then a one-dimensional module $V_g$ appears in the direct sum decomposition of $W_1 \otimes W_2$ at most once and if and only if $W_1^* \cong W_2 \otimes V_{g^{-1}}$.
\end{lemma}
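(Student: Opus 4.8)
The plan is to reduce the statement to Schur's Lemma via the rigidity of the module category. Since $H$ is finite dimensional and semisimple over an algebraically closed field of characteristic zero, the category of finite dimensional $H$-modules is a semisimple rigid monoidal category; hence the multiplicity of the simple module $V_g$ in $W_1 \otimes W_2$ equals $\dim_k \text{Hom}_H(V_g, W_1 \otimes W_2)$. The whole argument is therefore about computing this single Hom space.

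First I would use the duality adjunction supplied by the antipode. Because $H$ is semisimple in characteristic zero, the antipode squares to the identity, so left and right duals coincide and each simple module $W$ has a simple dual $W^*$. The evaluation and coevaluation maps then give the natural isomorphism
$$ \text{Hom}_H(V_g, W_1 \otimes W_2) \;\cong\; \text{Hom}_H(W_1^* \otimes V_g,\, W_2). $$
At this point I would record the two auxiliary facts I need: the dual of a simple module is simple, and tensoring any simple module by the one-dimensional (hence invertible) module $V_g$ again yields a simple module. Consequently both $W_1^* \otimes V_g$ and $W_2$ are simple.

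Next I would apply Schur's Lemma to the simple modules $W_1^* \otimes V_g$ and $W_2$: the Hom space vanishes unless the two are isomorphic, in which case it is one-dimensional. This immediately yields that $V_g$ occurs in $W_1 \otimes W_2$ with multiplicity at most one, and that the multiplicity is exactly one precisely when $W_1^* \otimes V_g \cong W_2$. Finally, since $V_g$ is invertible with inverse $V_g^* \cong V_{g^{-1}}$ (the assignment $g \mapsto V_g$ being the group isomorphism between $G(H^*)$ and the group of one-dimensional $H$-modules recalled above), I would rewrite $W_1^* \otimes V_g \cong W_2$ as $W_1^* \cong W_2 \otimes V_{g^{-1}}$, which is exactly the stated condition.

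I expect the only genuinely delicate point to be pinning down the duality adjunction in the correct direction, so that the argument lands on $W_1^* \cong W_2 \otimes V_{g^{-1}}$ rather than a mirrored variant; this is where the choice of left versus right dual matters, and it is resolved by the fact that $S^2 = \text{id}$ for a semisimple Hopf algebra in characteristic zero, which makes the two notions of dual agree. Everything else — semisimplicity expressing multiplicities as Hom-dimensions, simplicity of $W_1^* \otimes V_g$, and the application of Schur's Lemma — is routine.
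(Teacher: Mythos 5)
Your proof is correct and takes essentially the same approach as the paper's: both convert the multiplicity of $V_g$ in $W_1 \otimes W_2$ into the dimension of a Hom space between two simple modules via the duality adjunction and then apply Schur's Lemma. The paper phrases the adjunction as $\mathrm{Hom}_k(W_1^*, W_2 \otimes V_{g^{-1}}) \cong W_1 \otimes W_2 \otimes V_{g^{-1}}$ and counts copies of the trivial module, whereas you use $\mathrm{Hom}_H(V_g, W_1 \otimes W_2) \cong \mathrm{Hom}_H(W_1^* \otimes V_g, W_2)$, but this is the same computation.
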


\begin{proof}
This follows from the fact that 
$$ \rm Hom_k(W_1^*,W_2 \otimes V_{g^{-1}}) \cong W_1 \otimes W_2 \otimes V_{g^{-1}} $$
The number of linearly independent isomorphisms between $W_1^*$ and $W_2 \otimes V_{g^{-1}}$ corresponds to the multiplicity of $V_{\rm id}$ in the direct sum decomposition of $W_1 \otimes W_2 \otimes V_{g^{-1}}$ and therefore to the multiplicity of $V_g$ in the direct sum decomposition of $W_1 \otimes W_2$. By Schur's Lemma, all isomorphisms are scalar multiples and the result follows.
\end{proof}

If a Hopf algebra $H$ has precisely $a_i$ simple modules of dimension $n_i$, then we say that the module structure of $H$ is $(n_1^{a_i}, \ldots, n_r^{a_r})$.

\begin{corollary} \label{cor unique higher dimension}
A Hopf algebra $H$ with module structure $(1^a, n)$ is of Knutson type.
\end{corollary}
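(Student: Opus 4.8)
The plan is to dispose of the one-dimensional simple modules immediately and to concentrate the argument on the unique $n$-dimensional simple module, which I will call $W$. For any one-dimensional module $V_g$, tensoring by $V_g$ permutes the isomorphism classes of simple modules while preserving their dimensions, so the multiplicity of each simple $T$ in $V_g \otimes H$ is $\dim(T)$; hence $V_g \otimes H \cong H$ and $\mathcal{K}(V_g) = 1$ with regular inverse $H$ itself. It therefore remains only to prove $\mathcal{K}(W) = 1$.

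First I would determine the structure of $W$ in the representation ring. Since $G(H^*)$, the group of one-dimensional modules, acts on the set of $n$-dimensional simple modules, and here that set is the singleton $\{W\}$, the orbit-stabiliser theorem forces the stabiliser of $W$ to be all of $G(H^*)$; thus $W \otimes V_g \cong W$ for every one-dimensional $V_g$. Moreover $W^*$ is again the unique $n$-dimensional simple module, so $W^* \cong W$. Feeding $W_1 = W_2 = W$ into the preceding Lemma, a one-dimensional module $V_g$ occurs in $W \otimes W$ if and only if $W^* \cong W \otimes V_{g^{-1}}$, which now holds for every $g$ and always with multiplicity exactly one. Writing $V = \bigoplus_{g} V_g$ for the sum of all $a$ one-dimensional modules, this yields a decomposition $W \otimes W \cong V \oplus m\,W$ for some integer $m \geq 0$.

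The key arithmetic step is then a dimension count applied to this decomposition: comparing dimensions gives $n^2 = a + mn$, so $a = n(n-m)$ and in particular $n - m = a/n$ is a positive integer. This integrality, forced purely by the dimension count and requiring no appeal to Kaplansky's Sixth Conjecture, is exactly what makes the construction succeed. Using the relations $W \otimes V_g \cong W$ and $W \otimes W \cong V \oplus m\,W$, I would take the (in fact genuine) module $N = (n-m)\,V_{\mathrm{id}} \oplus W$ and compute $W \otimes N \cong (n-m)\,W \oplus (V \oplus m\,W) \cong V \oplus n\,W$, which is precisely the regular module, since $H \cong V \oplus n\,W$. Hence $\mathcal{K}(W) = 1$, and together with the one-dimensional case this shows $H$ is of Knutson type.

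I expect the main obstacle to be establishing the two structural facts $W \otimes V_g \cong W$ and the complete form of $W \otimes W \cong V \oplus m\,W$, both of which rest on the uniqueness of the $n$-dimensional simple module together with the Lemma and the action of $G(H^*)$ recalled just above. Once these are secured, the fact that $n - m = a/n$ is an integer makes the explicit regular inverse $N$ immediate, so the remainder of the argument is a routine verification in the representation ring.
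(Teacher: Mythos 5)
Your proof is correct and follows essentially the same route as the paper: establish $W\otimes V_g\cong W$ and $W^*\cong W$, invoke the Lemma to get $W\otimes W\cong V\oplus mW$ with $n^2=a+mn$, and exhibit the regular inverse $(a/n)\,V_{\mathrm{id}}\oplus W$ (your $n-m$ equals $a/n$). No substantive differences to report.
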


\begin{proof}
Let $W$ be the unique module of dimension $n$, $V_g$ any one-dimensional module and $V$ the direct sum of all one-dimensional modules. As $W \otimes V_g$ is a simple module of dimension $n$, we must have that $W \otimes V_g \cong W$. Similarly, $W^* \cong W$ and this implies that $W^* \cong W \otimes V_{g^{-1}}$.

It follows from the previous lemma that all one-dimensional modules appear precisely ones in the direct sum decomposition of $W \otimes W$. So $W \otimes W \cong V \otimes s W$ where $s = \frac{n^2 - a}{n}$. Hence
$$ W \otimes \left(\dfrac{a}{n} V_{\text{id}} \oplus W \right) = H $$
and we conclude that $H$ is of Knutson type.
\end{proof}

\begin{proposition} \label{prop 8}
All semisimple Hopf algebras of dimension $8$ are of Knutson type. 
\end{proposition}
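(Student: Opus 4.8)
The plan is to reduce the statement to a short enumeration of the possible module structures of an $8$-dimensional semisimple Hopf algebra $H$ and then to dispatch each case using results already in hand. The starting point is the identity $\dim H = \sum_i a_i n_i^2 = 8$, where $H$ has $a_i$ simple modules of dimension $n_i$. Since any simple module of dimension at least $3$ would already contribute $9 > 8$ to this sum, every simple module of $H$ must have dimension $1$ or $2$; notably, I do not even need to invoke Kaplansky's Sixth Conjecture to see this. Writing $a$ for the number of one-dimensional modules and $b$ for the number of two-dimensional ones, the dimension equation becomes $a + 4b = 8$, and the trivial module arising from the counit guarantees $a \geq 1$.

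Next I would observe that $a = 8 - 4b$ forces $a \equiv 0 \pmod 4$, so together with $a \geq 1$ the only admissible pairs are $(a,b) = (8,0)$ and $(a,b) = (4,1)$. These correspond to the module structures $(1^8)$ and $(1^4, 2)$ respectively. As an independent check, one can note that the group of one-dimensional modules is $G(H^{*})$, whose order divides $\dim H = 8$ by the Nichols--Zoeller Theorem, which again rules out the value $a = 2$ and confirms the list is exhaustive.

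Finally I would treat the two surviving cases. When the structure is $(1^8)$, all simple modules are one-dimensional, hence $H$ is commutative; each one-dimensional module is its own regular inverse up to sign and so has Knutson Index one, giving a Hopf algebra of Knutson type. When the structure is $(1^4, 2)$, there is a unique simple module of dimension greater than one, so $H$ has module structure $(1^a, n)$ with $a = 4$ and $n = 2$, and Corollary \ref{cor unique higher dimension} applies verbatim to conclude that $H$ is of Knutson type. I expect no genuine obstacle in this argument: the one step demanding care is verifying that the enumeration of module structures is complete, and the dimension count above settles that cleanly, leaving the Knutson-type conclusion to follow immediately from the earlier corollary.
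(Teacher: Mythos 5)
Your proof is correct and follows essentially the same route as the paper: reduce to the module structure $(1^4,2)$ in the non-commutative case and apply Corollary \ref{cor unique higher dimension}. The only difference is that you spell out the enumeration of module structures via the dimension count $a + 4b = 8$, which the paper simply asserts.
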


\begin{proof}
If $H$ is non-commutative then its module structure is $(1^4, 2)$. So by Corollary \ref{cor unique higher dimension}, it is of Knutson type.
\end{proof}

The smallest dimension in which a non-commutative and non-cocommutative semisimple Hopf algebra appears is $8$ and is known as the Kac-Paljutkin algebra. So we have shown that all semisimple Hopf algebras of dimension up to $8$ are of Knutson type. The next non-commutative and non-cocommutative semisimple Hopf algebra shows up in dimension $12$.

\begin{proposition} \label{prop 12}
All semisimple Hopf algebras over $k$ of dimension $12$ are of Knutson type.
\end{proposition}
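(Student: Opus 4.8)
The plan is to show that every simple $H$-module has trivial Knutson Index by first pinning down the possible module structures and then treating each. Writing the module structure as $(1^{a}, n_2^{a_2}, \ldots)$, the dimensions satisfy $a + \sum_{i \geq 2} a_i n_i^2 = 12$, while the number $a$ of one-dimensional modules equals $|G(H^*)|$, which divides $\dim H^* = 12$ by the Nichols--Zoeller Theorem \cite{NZ}. A short enumeration of the solutions with $n_i \geq 2$ and $a$ dividing $12$ leaves only three structures: $(1^{12})$, $(1^3, 3)$ and $(1^4, 2^2)$. If $H$ is commutative then all simple modules are one-dimensional, and since these form the group $G(H^*)$ under tensor product, tensoring the regular module $\bigoplus_g V_g$ by any $V_h$ merely permutes the summands; hence every $V_h$ is regular invertible and $H$ is of Knutson type, covering $(1^{12})$. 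The structure $(1^3, 3)$ is of Knutson type by Corollary \ref{cor unique higher dimension}. So the proposition reduces to the case $(1^4, 2^2)$.

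For $(1^4, 2^2)$ I would write $V_1, \ldots, V_4$ for the one-dimensional modules (with $V_1$ trivial) and $W_1, W_2$ for the two-dimensional simples. The group $\Gamma = G(H^*)$ of order four acts on $\{W_1, W_2\}$ by tensoring, so either it fixes both modules or it swaps them, and each $W_i$ is either self-dual or satisfies $W_1^* \cong W_2$. The key tool is the preceding lemma: a one-dimensional module $V_g$ occurs in $W_i \otimes W_j$ exactly once if $W_i^* \cong W_j \otimes V_{g^{-1}}$ and not at all otherwise. In each of the finitely many sub-cases this determines precisely which one-dimensional modules appear in $W_1 \otimes W_1$ and $W_1 \otimes W_2$, the supporting set being either empty or a full coset of $\mathrm{Stab}(W_1)$. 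The two-dimensional constituents are then fixed in each sub-case by comparison with the identity $W_1 \otimes H \cong 2H$, which is valid because the antipode is invertible (see the proof of Proposition \ref{prop divides}).

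With the fusion rules in hand, I would exhibit a regular inverse of the uniform shape $N = V_1 \oplus V_g \oplus W_1 \oplus W_2$, that is, two one-dimensional modules together with both two-dimensional simples, where $g$ is chosen according to the orbit type: $g$ arbitrary when $\Gamma$ fixes $W_1$, and $g$ outside $\mathrm{Stab}(W_1)$ when $\Gamma$ swaps the two modules. A direct check against the determined fusion rules gives $W_1 \otimes N \cong V_1 \oplus \cdots \oplus V_4 \oplus 2W_1 \oplus 2W_2 \cong H$ in every sub-case, and the same computation with the roles of $W_1$ and $W_2$ interchanged handles $W_2$. Hence $\mathcal{K}(W_1) = \mathcal{K}(W_2) = 1$ and $H$ is of Knutson type. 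This inverse is the natural analogue of the one constructed for dihedral groups in Example \ref{ex dih}.

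The main obstacle is the fusion-rule determination in the $(1^4, 2^2)$ case: because the representation ring of a Hopf algebra need not be commutative, I cannot simply quote a group-character computation and must instead track the $\Gamma$-action together with the duality $W_i \mapsto W_i^*$ through each sub-case. The saving grace is that the multiplicity lemma constrains the one-dimensional part of each product $W_i \otimes W_j$ to single multiplicities supported on a coset of the stabiliser, so only a handful of fusion patterns are consistent with $W_1 \otimes H \cong 2H$, and a finite verification shows that the inverse $N$ above works in all of them.
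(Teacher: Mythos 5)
Your proposal is correct, and in the decisive case it performs exactly the computation the paper does: cancel $W_i\otimes(V_1\oplus\cdots\oplus V_4)$ from $W_i\otimes H\cong 2H$, divide by two, and read off the regular inverse, which in the paper is $V_1\oplus V_2\oplus W_1\oplus W_2$ and in your notation is $V_1\oplus V_g\oplus W_1\oplus W_2$ with $g\notin\mathrm{Stab}(W_1)$. Where you genuinely diverge is in the setup. The paper leans on the classification: it lists the five non-commutative Hopf algebras of dimension $12$ and cites \cite{fukuda12} for the fact that in the $(1^4,2^2)$ cases two one-dimensional modules act trivially and two swap $W_1$ and $W_2$. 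You instead derive the possible module structures abstractly from $a+\sum a_in_i^2=12$ together with $a=|G(H^*)|$ dividing $12$ by Nichols--Zoeller, and then treat both conceivable actions of $G(H^*)$ on $\{W_1,W_2\}$. This buys self-containedness --- no appeal to the classification of $12$-dimensional semisimple Hopf algebras or to the explicit action --- and it is in effect an early instance of the paper's later Theorem \ref{thm two dim}. What it costs is the sub-case bookkeeping, and here you are working harder than necessary: since there are only two two-dimensional simples, the orbit of $W_i$ under $G(H^*)$ has size at most two, so $|\mathrm{Stab}(W_i)|\in\{2,4\}$ automatically, and the single identity $W_i\otimes(W_1\oplus W_2)\oplus|\mathrm{Stab}(W_i)|\,W^{(i)}\cong 2H$ already yields the inverse in both cases without determining the full fusion rules or tracking duals. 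The "main obstacle" you flag therefore dissolves.
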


\begin{proof}
There are five non-commutative Hopf algebras of dimension $12$. These are two non-commutative and non-cocommutative Hopf algebras and the group algebras of the three non-abelian groups of order $12$, the dicyclic group $\rm Dic_3$, the dihedral group $\rm D_{12}$ and the alternating group $A_4$. The latter has module structure $(1^3, 3)$ so it follows by Corollary \ref{cor unique higher dimension} that it is of Knutson type. The other four Hopf algebras have module structure $(1^4,2^2)$ with two one-dimensional modules acting trivially on the two-dimensional modules and the other two one-dimensional modules acting by swapping them \cite{fukuda12}. We can therefore discuss them together.

Denote by $V_1, V_2, V_3, V_4$ the one-dimensional modules and by $W_1, W_2$ the simple two-dimensional modules, where $V_1$ and $V_3$ are the ones acting trivially on them.

So we have that $W_i \otimes (V_1 \oplus V_2 \oplus V_3 \oplus V_4) \cong 2 W_1 \oplus 2 W_2$. After cancelling this from $W_i \otimes H \cong 2 H$ and dividing by two, we obtain the following.
$$ W_i \otimes (W_1 \oplus W_2) \cong V_1 \oplus V_2 \oplus V_3 \oplus V_4 \oplus W_1 \oplus W_2 $$
This show that $\mathcal{K}(W_1) = \mathcal{K}(W_2) = 1$ as
$$ W_i \otimes (V_1 \oplus V_2 \oplus W_1 \oplus W_2) \cong H $$
So all semisimple Hopf algebras of dimension $12$ are of Knutson type.
\end{proof}

Note that we could construct the regular inverses for the $W_i$'s straightforwardly because of the following two facts. First, all simple modules of $H$ are either one-dimensional or $n$-dimensional for some fixed $n > 1$. Secondly, we could divide by $n$ as the order of the stabiliser of all $W_i$'s under the action of the one-dimensional modules is divisible by $n$. In fact, we now show if the first assumption holds, then the second must hold and the Hopf algebra must be of Knutson type.

\begin{lemma} \label{lemma divides stab}
Let $W$ be a simple $H$-module. If the dimension of $W$ divides the dimensions of all simple modules of dimension greater than one, then the dimension of $W$ also divides the order of stabiliser of $W$ under the action of the $V_g$'s.
\end{lemma}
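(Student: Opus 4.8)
The plan is to read off the divisibility from the decomposition of $W \otimes W^*$ into simple modules, combined with a dimension count. Write $n = \dim(W)$; since $1$ divides everything, I may assume $n > 1$, so that $W$ is one of the modules of dimension greater than one.

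\emph{Step 1: identify the one-dimensional constituents of $W \otimes W^*$.} By the multiplicity-one lemma proved above (a one-dimensional module appears in the tensor product of two simple modules at most once, and $V_g$ appears in $W_1 \otimes W_2$ iff $W_1^* \cong W_2 \otimes V_{g^{-1}}$), taking $W_1 = W$ and $W_2 = W^*$ shows that $V_g$ occurs in $W \otimes W^*$, with multiplicity exactly one, precisely when $W^* \cong W^* \otimes V_{g^{-1}}$. Hence if $t$ denotes the number of one-dimensional summands of $W \otimes W^*$, then $t = |\{g \in G(H^*) : W^* \otimes V_{g^{-1}} \cong W^*\}|$.

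\emph{Step 2: match $t$ with the order of the stabiliser.} Dualising the relation $V_g \otimes W \cong W$ and using $V_g^* \cong V_{g^{-1}}$ gives $W^* \otimes V_{g^{-1}} \cong W^*$, and the implication reverses, so $g \mapsto g^{-1}$ is a bijection from the stabiliser $\mathrm{Stab}(W) = \{g : V_g \otimes W \cong W\}$ onto the set counted in Step 1. Therefore $t = |\mathrm{Stab}(W)|$.

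\emph{Step 3: dimension count.} Decompose $W \otimes W^* \cong \bigoplus_{g} V_g \oplus \bigoplus_j m_j U_j$, where the first sum runs over the $t$ one-dimensional constituents and each $U_j$ is simple of dimension greater than one. Comparing dimensions gives $n^2 = t + \sum_j m_j \dim(U_j)$. The hypothesis guarantees $n \mid \dim(U_j)$ for every $j$, and trivially $n \mid n^2$, so $n$ divides $t = |\mathrm{Stab}(W)|$, which is the claim.

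The whole argument is short, and I expect the only point requiring care to be the left/right bookkeeping in Step 2: because the representation ring of $H$ need not be commutative, one must verify that duality genuinely converts the stabiliser $\{g : V_g \otimes W \cong W\}$ into the set $\{g : W^* \otimes V_{g^{-1}} \cong W^*\}$ that naturally counts the one-dimensional summands of $W \otimes W^*$, rather than into some other set of the same description. Once this bijection is pinned down, the rest is a routine dimension comparison.
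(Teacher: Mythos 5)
Your proof is correct and follows essentially the same route as the paper's: decompose $W \otimes W^*$, identify its one-dimensional constituents with the stabiliser via the multiplicity-one lemma, and conclude by comparing dimensions. The only difference is that you spell out the duality/inversion bookkeeping identifying the one-dimensional summands with the stabiliser, which the paper leaves implicit.
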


\begin{proof}
Let $\text{dim}(W)=n$ and denote by $V^{(i)}$ be the direct of all modules $V_g$ such that $W \otimes V_g \cong W$. Then
$$ W \otimes W^* = V^{(i)} \oplus W' $$
where the dimension of $W'$ is divisible by $n$. So $n^2 = |\text{Stab}(W_i)| + k n$ and $n$ divides $|\text{Stab}(W_i)|$.
\end{proof}

\begin{theorem} \label{thm two dim}
A finite dimensional semisimple Hopf algebra $H$ over $k$ with module structure $(1^a, n^b)$ is of Knutson type.
\end{theorem}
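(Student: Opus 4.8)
The plan is to show directly that every $n$-dimensional simple module $W$ is regular invertible by exhibiting an explicit inverse, generalising the construction used for dimension $12$ in Proposition \ref{prop 12}. Write $G = G(H^*)$ for the finite group of one-dimensional modules, with $|G| = a$, acting on the set of $n$-dimensional simple modules by tensoring. Fix an $n$-dimensional simple $W$, let $U = \{V_g : W \otimes V_g \cong W\}$ be its stabiliser with $s = |U|$, let $O$ be its orbit, and let $T \subseteq G$ be a transversal of $G/U$, so that $|T| = |O| = a/s$. I will write $\mathcal{W}$ for the sum of all $n$-dimensional simple modules, $\mathcal{W}_O$ for the sum of those in the orbit $O$, and $V = \bigoplus_{g} V_g$, so that the regular module is $H \cong V \oplus n\mathcal{W}$.

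First I would record two elementary $G$-averaging identities obtained by letting $W$ act on one-dimensional modules: since the transversal $T$ maps bijectively onto the orbit $O$, one gets $W \otimes \left(\bigoplus_{g \in T} V_g\right) \cong \mathcal{W}_O$, and since each orbit element is hit exactly $s$ times as $g$ ranges over all of $G$, one gets $W \otimes V \cong s\,\mathcal{W}_O$. The only external inputs then needed are Proposition \ref{prop divides}, giving $W \otimes H \cong nH$, and Lemma \ref{lemma divides stab}, which guarantees $n \mid s$ so that $s/n$ is a positive integer.

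The heart of the argument is to combine these facts to compute $W \otimes \mathcal{W}$ without ever analysing the individual products $W \otimes W_j$. Expanding $W \otimes H$ in two ways gives $s\,\mathcal{W}_O \oplus n\,(W \otimes \mathcal{W}) \cong nV \oplus n^2\mathcal{W}$, and solving in the representation ring yields $W \otimes \mathcal{W} \cong H \ominus \tfrac{s}{n}\mathcal{W}_O$. Setting $N = \mathcal{W} \oplus \tfrac{s}{n}\left(\bigoplus_{g \in T} V_g\right)$ — an honest module, since $s/n \geq 1$ — the first averaging identity cancels the correction term and gives $W \otimes N \cong H$, so $\mathcal{K}(W) = 1$. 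As the one-dimensional modules are invertible and hence trivially regular invertible, this shows $H$ is of Knutson type.

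I expect the main subtlety to be precisely the point that the argument above sidesteps: a priori one would need to understand $W \otimes W_j$ across all orbits, and the products with modules outside the orbit of $W$ carry no clean structure. The trick is that $W \otimes H \cong nH$ already packages all of that information, so the only orbit one must understand explicitly is the orbit of $W$ itself, which is controlled by its stabiliser. The one genuine arithmetic input making the construction integral is the divisibility $n \mid s$ from Lemma \ref{lemma divides stab}; without it the candidate inverse would be merely rational and one would recover at best $\mathcal{K}(W) \mid n$, as in Proposition \ref{prop divides}.
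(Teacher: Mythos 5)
Your argument is correct and is essentially the paper's own proof: both substitute the orbit--stabiliser identity $W \otimes V \cong |\mathrm{Stab}(W)|\,\mathcal{W}_O$ into $W \otimes H \cong nH$, invoke Lemma \ref{lemma divides stab} to make the coefficient $s/n$ integral, and arrive at the same regular inverse $\mathcal{W} \oplus \tfrac{s}{n}\bigl(\bigoplus_{g \in T} V_g\bigr)$ (the paper's $k\,V^{(i)} \oplus W$). Your write-up is if anything slightly more explicit about the transversal and about why the products $W \otimes W_j$ outside the orbit never need to be analysed.
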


\begin{proof}
Fix $W_i$ and let
$$ V = \bigoplus_{j=1}^k V_j \qquad \qquad W = \bigoplus_{j=1}^l W_j \qquad \qquad W^{(i)} = \bigoplus_{j \in J_i} W_j $$
where $J_i = \{j \in [1,l] \ | \ W_j \in \text{Orb}(W_i) \}$. We have that
$$ W_i \otimes V \cong |\text{Stab}(W_i)| \ W^{(i)} $$
So substituting this into $ W_i \otimes H \cong W_i \otimes (V \oplus n W) \cong n H $
yields 
$$ |\text{Stab}(W_i)| \ W^{(i)} \oplus (W_i \otimes n W) \cong n H $$
By Lemma \ref{lemma divides stab} we have that $n$ divides $|\text{Stab}(W_i)|$ so
$$ k  W^{(i)} \oplus (W_i \otimes W) \cong H $$
where $k = \frac{|\text{Stab}(W_i)|}{n}$. We conclude that
$$ W_i \otimes (k V^{(i)} \oplus W) \cong H $$
where $V^{(i)}$ as a direct sum of one-dimensional modules such that $W_i \otimes V^{(i)} \cong W^{(i)}$.
\end{proof}

\begin{remark} \label{rmk Kaplansky}
Let $p, q$ and $r$ be distinct primes. Kaplansky's Sixth Conjecture holds for semisimple Hopf algebras of dimensions $p$, $p^2$ and $p q$ by their classifications. The conjecture is also true for dimensions $p q^2$, $p q r$ \cite{etingofKaplansky} and $p^n$ \cite{montgomery-witherspoon}. So we will assume that in these cases, the dimension of a simple $H$-module divides the dimension of the Hopf algebra $H$.
\end{remark}

\begin{corollary} \label{cor p3 p4}
All semisimple Hopf algebras of dimension $p^3$ and $p^4$ are of Knutson type. 
\end{corollary}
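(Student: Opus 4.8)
The plan is to reduce both cases to Theorem \ref{thm two dim} by showing that a semisimple Hopf algebra $H$ of dimension $p^3$ or $p^4$ necessarily has module structure of the form $(1^a, p^b)$; that is, every simple module of dimension greater than one has dimension exactly $p$. The two ingredients I would combine are Kaplansky's Sixth Conjecture, which is available in these dimensions via Remark \ref{rmk Kaplansky} since $p^3$ and $p^4$ are prime powers, and the Artin--Wedderburn identity $\sum_i a_i n_i^2 = \dim H$, where the simple modules have pairwise distinct dimensions $n_i$ occurring with multiplicities $a_i$.

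First I would invoke Remark \ref{rmk Kaplansky} to conclude that the dimension of each simple $H$-module divides $\dim H$, and is therefore a power of $p$. Combined with the constraint $n_i^2 \le \dim H$ coming from the identity above (every summand is positive and $a_i \ge 1$), this forces $n_i \in \{1, p\}$ when $\dim H = p^3$ and $n_i \in \{1, p, p^2\}$ when $\dim H = p^4$. In the case $\dim H = p^3$ the module structure is already of the form $(1^a, p^b)$, so Theorem \ref{thm two dim} applies directly and we are done.

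The one remaining case, and the only mildly delicate point, is $\dim H = p^4$ when a $p^2$-dimensional simple module is a priori allowed: the structure could conceivably be $(1^a, p^b, (p^2)^c)$, which is not covered by Theorem \ref{thm two dim}. I would rule this out using the identity $a + b\,p^2 + c\,p^4 = p^4$, noting that $(p^2)^2 = p^4$: a single $p^2$-dimensional module already contributes the entire dimension, leaving $a + b\,p^2 = (1-c)p^4 \le 0$, which is impossible because the trivial module guarantees $a \ge 1$. Hence $c = 0$, the structure is again $(1^a, p^b)$, and Theorem \ref{thm two dim} finishes the argument. I expect this counting step to be the crux; everything else is a direct appeal to Kaplansky and Wedderburn. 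Finally, if $b = 0$ then all simple modules are one-dimensional, so $H$ is commutative and of Knutson type, and this degenerate situation causes no difficulty.
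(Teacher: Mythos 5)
Your proposal is correct and follows essentially the same route as the paper: the paper's proof simply asserts that a simple module over a semisimple Hopf algebra of dimension $p^3$ or $p^4$ must have dimension $1$ or $p$ and then cites Theorem \ref{thm two dim}, and your combination of Remark \ref{rmk Kaplansky} with the Artin--Wedderburn identity $\sum_i a_i n_i^2 = \dim H$ is precisely the justification of that assertion (which the paper leaves implicit). The only difference is that you spell out the counting step ruling out a $p^2$-dimensional simple module in dimension $p^4$, which is a welcome addition rather than a deviation.
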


\begin{proof}
A simple module over Hopf algebra of dimension $p^3$ or $p^4$ must be of dimension $1$ or $p$. So by Theorem \ref{thm two dim}, we conclude that all semisimple Hopf algebras of dimension $p^3$ and $p^4$ are of Knutson type.
\end{proof}

We can further generalise the previous theorem by dropping the assumption that all simple modules are of two possible dimensions.

\begin{theorem} \label{thm dim divides}
Let $U$ be a simple module over a finite dimensional semisimple Hopf algebra $H$. If $dim(U)$ divides the order of all simple modules that are not one-dimensional, then $\mathcal{K}(U) = 1$.
\end{theorem}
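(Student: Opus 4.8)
The plan is to mimic the proof of Theorem \ref{thm two dim}, the only new feature being that the non-one-dimensional simple modules no longer share a single dimension. Write $n = \dim(U)$; if $n = 1$ then $U$ is one-dimensional and $\mathcal{K}(U) = 1$ at once, so assume $n > 1$, whence $U$ is itself one of the non-one-dimensional simple modules. List the one-dimensional modules as $V_g$ with $g \in \Gamma := G(H^*)$ and the non-one-dimensional simple modules as $W_1, \dots, W_r$ with $d_j = \dim(W_j)$, so that the regular module decomposes as $H \cong V \oplus \bigoplus_{j=1}^r d_j W_j$, where $V = \bigoplus_g V_g$. The hypothesis is exactly that $n \mid d_j$ for every $j$.

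First I would record two identities coming from the action of $\Gamma$ on the simple modules. Writing $W^{(U)} = \bigoplus_{W_j \in \mathrm{Orb}(U)} W_j$ for the sum over the orbit of $U$, the orbit--stabiliser relation gives $U \otimes V \cong |\mathrm{Stab}(U)|\, W^{(U)}$, since each $U \otimes V_g$ is a simple $n$-dimensional module in the orbit of $U$ and each orbit member is hit exactly $|\mathrm{Stab}(U)|$ times. Moreover, choosing a transversal $T$ of $\mathrm{Stab}(U)$ in $\Gamma$ and setting $V^{(U)} = \bigoplus_{g \in T} V_g$, the same count gives $U \otimes V^{(U)} \cong W^{(U)}$; this is the device that lets a multiple of $W^{(U)}$ be rewritten as $U \otimes(\,\cdot\,)$.

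Next, I would subtract $U \otimes V \cong |\mathrm{Stab}(U)|\, W^{(U)}$ from $U \otimes H \cong n H$. Since each $d_j$ is divisible by $n$, I can factor $\bigoplus_j d_j W_j = n \bigoplus_j \tfrac{d_j}{n} W_j$, so the resulting identity reads $n\big(U \otimes \bigoplus_j \tfrac{d_j}{n} W_j\big) = n H - |\mathrm{Stab}(U)|\, W^{(U)}$ in the representation ring. The crux is that every coefficient on the right is divisible by $n$: the terms $n d_j$ trivially, and $|\mathrm{Stab}(U)|$ because Lemma \ref{lemma divides stab} applies verbatim under our divisibility hypothesis and yields $n \mid |\mathrm{Stab}(U)|$. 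Dividing by $n$ then gives $U \otimes \bigoplus_j \tfrac{d_j}{n} W_j = H - k\, W^{(U)}$ with $k = |\mathrm{Stab}(U)|/n \in \mathbb{Z}_{>0}$.

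Finally, I would replace $k\, W^{(U)}$ by $U \otimes (k V^{(U)})$ using the second identity, obtaining $U \otimes\big(\bigoplus_j \tfrac{d_j}{n} W_j \oplus k V^{(U)}\big) \cong H$, which exhibits a genuine regular inverse and gives $\mathcal{K}(U) = 1$. The only real obstacle is the divisibility bookkeeping in the third step — verifying that dividing by $n$ keeps us inside the representation ring rather than producing a fractional combination — and this is settled precisely by Lemma \ref{lemma divides stab} together with the hypothesis $n \mid d_j$. Everything else is the same reassembly as in Theorem \ref{thm two dim}.
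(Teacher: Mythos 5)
Your proposal is correct and follows essentially the same route as the paper's own proof: decompose $H \cong V \oplus \bigoplus_j d_j W_j$, use orbit--stabiliser for $U \otimes V$, invoke Lemma \ref{lemma divides stab} together with $n \mid d_j$ to divide the identity $U \otimes H \cong nH$ by $n$, and reassemble the orbit term via a transversal $V^{(U)}$. Your version is, if anything, slightly more explicit about the divisibility bookkeeping than the paper's (which trails off), but the argument is the same.
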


\begin{proof}
Let $n = \text{dim}(U)$ and denote by $V$ be the direct sum of all one-dimensional modules and by $W$ such that $H \cong V \oplus W$. As $n$ divides the dimension of every module appearing in $W$, we can write $W \cong n W'$ for some module $W'$. Denote by $U'$ be the direct sum of all modules in the orbit of $U$. As $U \otimes H \cong U \otimes (V \oplus W) \cong n H$ we have that 
$$ |\text{Stab}(U)| \ U' \oplus (U \otimes W) \cong n H $$
by our assumptions, this is divisible by $n$, so 
$$ U \otimes (k V' \oplus W') \cong H $$
where $k = \frac{|\text{Stab}(U)|}{n}$ and $V'$ 
\end{proof}

Putting all our previous results together and considering some special cases individually, we obtain the following corollary.

\begin{corollary}
All semisimple Hopf algebras over $k$ of dimension up to $31$, except the Hopf algebras of dimension $24$ with module structure $(1^2, 2, 3^2)$ type II, are of Knutson type. 
\end{corollary}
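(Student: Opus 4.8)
The plan is to work through each dimension $D$ with $1 \leq D \leq 31$ and reduce it to one of the structural results already established. If $D$ is prime, a prime square $p^2$, or a product $pq$ of two distinct primes, then $H$ is of Knutson type by Example \ref{ex p p2 pq}; if $D = p^3$ or $D = p^4$, by Corollary \ref{cor p3 p4}; and $D = 12$ is Proposition \ref{prop 12}. This settles every $D \leq 31$ except $D \in \{18, 20, 24, 28, 30\}$, which I would treat one at a time.

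For these I would first list the admissible module structures $(1^a, n_1^{b_1}, \ldots, n_r^{b_r})$, using that the dimensions satisfy $\sum_i b_i n_i^2 = D$ and that $a = |G(H^{*})|$ divides $D$ by the Nichols-Zoeller Theorem. The decisive observation is that Theorem \ref{thm two dim}, together with its special case Corollary \ref{cor unique higher dimension}, settles any structure with a single non-trivial dimension, whereas a structure containing two distinct non-trivial dimensions always escapes these theorems, since the largest such module is never covered by Theorem \ref{thm dim divides}. For $D = 20 = 2^2 \cdot 5$ a direct count shows that two distinct non-trivial dimensions cannot coexist (any such structure would already exceed $20$ in its sum of squares), so only $(1^{20})$, $(1^4, 4)$ and $(1^4, 2^4)$ arise and Corollary \ref{cor unique higher dimension} and Theorem \ref{thm two dim} finish the case. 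For $D = 18$ and $D = 28$, both of the form $pq^2$, Kaplansky's Sixth Conjecture holds by Remark \ref{rmk Kaplansky}, so every non-trivial dimension divides $D$; combining this with the classification of semisimple Hopf algebras of these dimensions eliminates the only troublesome shapes, $(1, 2^2, 3)$ at $18$ and $(1^4, 2^2, 4)$ at $28$, and leaves only structures with a single non-trivial dimension. Finally, for $D = 30 = pqr$, the classification shows every such Hopf algebra is a group algebra or the dual of one; a group of order $30$ has Knutson-type group algebra by Corollary \ref{cor distinct primes}, while a dual is commutative.

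The real work is at $D = 24 = 2^3 \cdot 3$, where Remark \ref{rmk Kaplansky} gives no a priori control and I would instead invoke the explicit classification of the semisimple Hopf algebras of this dimension. Every group algebra of a group of order $24$ is of Knutson type, either directly from the computation over group algebras of dimension at most $120$ (which produced no counter-example below dimension $72$) or from Theorem \ref{thm Knutson Index vs reduction}, since the Sylow subgroups have orders $8$ and $3$ and the corresponding group algebras are of Knutson type; the duals are commutative. Running through the remaining non-trivial Hopf algebras by module structure, every shape is again single-non-trivial-dimensional, hence covered by Theorem \ref{thm two dim}, except the structure $(1^2, 2, 3^2)$. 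Because its non-trivial dimensions $2$ and $3$ are incomparable, neither Theorem \ref{thm two dim} nor Theorem \ref{thm dim divides} can cover both the two- and three-dimensional modules, and this is exactly the structure realised by the exceptional ``type II'' algebra.

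The main obstacle is therefore to decide the structure $(1^2, 2, 3^2)$ on the level of fusion rules rather than dimensions, since the same shape is realised both by $\mathbb{C}[S_4]$, which the Sylow argument already shows is of Knutson type, and by the type II Hopf algebra, which is not. To separate them I would read off from the classification the decompositions of $W \otimes W^{*}$ and of the products $U \otimes W$ of the two-dimensional module $U$ with the three-dimensional modules $W$, and show that for type II these force $\mathcal{K}(U) = 2$; by Proposition \ref{prop divides} we have $\mathcal{K}(U) \mid 2$ in any case, so exhibiting a single genuine obstruction to a $1$-regular inverse suffices. Everywhere else the divisibility theorems apply almost mechanically once the admissible structures are listed; the only delicate points beyond dimension $24$ are the uses of the classification to rule out the mixed structures at dimensions $18$ and $28$, which cannot be excluded by the numerical constraints $\sum_i b_i n_i^2 = D$ and $|G(H^{*})| \mid D$ alone.
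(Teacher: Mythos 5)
Your overall strategy coincides with the paper's: reduce most dimensions to Example \ref{ex p p2 pq}, Corollary \ref{cor p3 p4} and Proposition \ref{prop 12}, then treat $18, 20, 24, 28, 30$ via the classification of admissible module structures and Theorem \ref{thm two dim}. However, there is a concrete gap at dimension $24$. You assert that, after discarding group algebras and duals, ``every shape is again single-non-trivial-dimensional \dots except the structure $(1^2, 2, 3^2)$''. This is false: the classification cited in the paper also admits the mixed structure $(1^3, 2^3, 3)$ (numerically $3 + 3\cdot 4 + 9 = 24$, and it is realised, e.g., by $\mathbb{C}[{\rm SL}_2(\mathbb{F}_3)]$). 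Neither Theorem \ref{thm two dim} nor Theorem \ref{thm dim divides} applies to it, since $2 \nmid 3$ and $3 \nmid 2$, so your ``almost mechanical'' divisibility argument does not reach it, and you cannot dismiss it as a group algebra without knowing that no non-cocommutative Hopf algebra realises this shape. The paper closes this case by a fusion-rule computation: the Orbit--Stabiliser Theorem forces the three one-dimensional modules to act transitively on the three two-dimensional ones, whence $W_i \otimes W \cong V \oplus 3U$ and $U \otimes U \cong V \oplus 2U$, giving explicit regular inverses for all simple modules. Your proof needs this (or an equivalent) argument.

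A second, smaller, incompleteness is at the structure $(1^2, 2, 3^2)$ itself. The corollary is a positive statement about everything \emph{except} type II, so it is not enough to exhibit the obstruction $\mathcal{K}(U) = 2$ for type II; you must also verify that the type I fusion rules (where one one-dimensional module swaps the two three-dimensional modules) do yield regular inverses for every simple module, which you only gesture at. Finally, your parenthetical justification at dimension $20$ is off: the mixed shape $(1^7, 2, 3)$ has sum of squares exactly $20$, and it is excluded because $7 \nmid 20$ (Nichols--Zoeller), not because the sum of squares overshoots. The conclusion there is still correct.
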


\begin{proof}
First, recall that semisimple Hopf algebras of dimensions $p$, $p^2$ or $p q$, where $p$ and $q$ are distinct primes, are of Knutson type by Example \ref{ex p p2 pq}. We also know this is the case for semisimple Hopf algebras of dimension $p^3$ and $p^4$ by Corollary \ref{cor p3 p4}. Therefore, we just need to prove that a Hopf algebras $H$ that does not fall into any of these categories is of Knutson type.

\begin{itemize}
    \item \textbf{Dimension $\mathbf{12}$}: See Proposition \ref{prop 12}.
    \item \textbf{Dimension $\mathbf{18}$}: The order of $G(H^*)$ is $9, 6, 2$ \cite{masuoka18}. It follows that the module structure must be $(1^9, 3)$, $(1^6, 2^3)$ or $(1^2, 2^4)$ and by Theorem \ref{thm two dim} the Hopf algebra is of Knutson type. 
    \item \textbf{Dimension $\mathbf{20}$}: The module structure of $H$ is $(1^4, 2^4)$ or $(1^4, 4)$ and it follows from Theorem \ref{thm two dim} that in both cases the Hopf algebra is of Knutson type.
    \item \textbf{Dimension $\mathbf{24}$}: If $H$ is non-commutative then the module structure must be $(1^{12}, 2^3)$, $(1^8, 2^4)$, $(1^8, 4^1)$, $(1^6, 3^2)$, $(1^4, 2^5)$, $(1^3, 2^3, 3)$ or $(1^2, 2, 3^2)$ \cite{natale24-30-40-42-54-56}. By Theorem \ref{thm two dim} we just need to consider the cases $(1^3, 2^3, 3)$ and  $(1^2, 2, 3^2)$.
    
    \underline{Case $(1^3, 2^3, 3)$}: Let $V_1, V_2, V_3$ be the one-dimensional modules, $W_1, W_2, W_3$ the simple two-dimensional modules and $U$ the simple three-dimensional module. Let $V = V_1 \oplus V_2 \oplus V_3$ and $W = W_1 \oplus W_2 \oplus W_3$. 
    
    By the Orbit-Stabiliser Theorem, we have that $ W_i \otimes W_i^* $ contains one or three one-dimensional modules in its direct sum decomposition. As it is four-dimensional, the latter is not possible. It follows that the $V_i$'s act transitively on the $W_i$'s. Therefore $W_i \otimes W_j \cong V_k \oplus U$ where $k$ is such that $W_i^* \cong W_j \otimes V_k$. Hence $ W_i \otimes W \cong V \oplus 3 U $ and we conclude that $\mathcal{K}(W_i) = 1$ as 
    $$ W_i \otimes (2V \oplus W) \cong V \oplus 2W \oplus 3 U $$
    Let us now compute $\mathcal{K}(U)$. We have that $U \otimes V_i \cong U$ and cancelling $W \otimes W_i \cong V \oplus 3U$ from $H \otimes W_i \cong 2 H$ yields that $U \otimes W_i \cong W$. We can now deduce from $U \otimes H \cong 3 H$ that $U \otimes U \cong V \oplus 2U$ and it follows that 
    $$ U \otimes (V_i \oplus 2 W_j \oplus U) \cong H $$
    So we conclude the Hopf algebra is of Knutson type.

    \underline{Case $(1^2, 2, 3^2)$}: Let $V_1, V_2$ be the one-dimensional modules, $W$ the two-dimensional simple module and $U_1, U_2$ the three-dimensional simple modules. Let $V = V_1 \oplus V_2$ and $U = U_1 \oplus U_2$. 
    
    We have that $W \otimes V_i \cong W$ so $W \otimes W$ contains $V$ precisely ones and by dimensions argument, $W \otimes W \cong V \oplus W$. If we cancel these two isomorphisms from $W \otimes H \cong 2 H$ and divide by three, we obtain that $W \otimes U \cong 2U$. As the direct sum decomposition of $W \otimes U_i$ depends on the action of $V_1$ and $V_2$ on the three-dimensional modules, we treat the two cases separately.
    
    Type I: Suppose that $V_1$ acts trivially on the $U_i$'s and $V_2$ by swapping them. As $V_2$ acts trivially on $W$ and therefore on $W \otimes U_i$, we must have that $W \otimes U_i \cong U$. So 
    $$ W \otimes ( V_i \oplus W \oplus 3 U_j) \cong H $$ 
    and $\mathcal{K}(W) = 1$. If we cancel all the previous from $U_i \otimes H \cong 3 H$ and divide by three, we obtain that $U_i \otimes U \cong V \oplus 2 W \oplus 2 U$. Hence
    $$ U_i \otimes (V \oplus U) \cong H $$
    and we conclude that the Hopf algebra is of Knutson type.
    
    Type II: If $V_1$ and $V_2$ act trivially on the $U_i's$ then $W \otimes U_i \cong 2 U_i$ and it follows that $\mathcal{K}(W) = 2$ as $W \otimes M$ always contains an even number of copies of $U_i$'s for any virtual module $M$ of $H$. We also have that $U_i \otimes (V \otimes U) \cong H$ so $\mathcal{K}(U_i) = 1$. The Hopf algebra is not of Knutson type.

    \item \textbf{Dimension $\mathbf{28}$}: The possible module structures are $(1^4, 2^6)$ or $(1^4, 2^2, 4)$. In the first case, it follows from Theorem \ref{thm two dim} that the Hopf algebra is of Knutson type. Let us now show that no Hopf algebra of dimension $28$ has a simple module of dimension four. The only two non-abelian groups of order $28$ are the dihedral group $\text{D}_{28}$ and the dicyclic group $\text{Dic}_7$ and they both have six simple two-dimensional modules. There are two non-commutative and non-cocommutative Hopf algebras of dimension $28$ \cite{natalepq2ii} and they have a commutative Hopf subalgebra of dimension $14$ \cite{gelaki}. So all their simple modules are of dimension one or two. 
    
    \item \textbf{Dimension $\mathbf{30}$}: $H$ must be group algebras or a dual of a group algebra \cite{natale24-30-40-42-54-56}. The latter case is trivial and the former follows from Corollary \ref{cor distinct primes} as $30$ is a product of distinct primes.
\end{itemize}
\end{proof}

\section{Savistkii's Conjecture}

In this last section, we present counter-examples to Savitkii's refinement of Knutson's Conjecture. Let us start by introducing Savitskii's pre-order on the set of virtual complex characters of a finite group $G$ and define the notion of minimal characters. 

\begin{definition}[Minimal Character]
The following is a pre-order on the set of virtual complex characters of strictly positive degree of a finite group $G$.
We say that $\chi \leq \varphi$ if $\chi(\text{id}) \leq \varphi(\text{id})$ and $\varphi(g) = 0 \implies \chi(g) = 0 \ \forall g \in G $. A character $\chi$ is minimal if for any character $\varphi$ we have that $\varphi \leq \chi \implies \chi \leq \varphi$. We denote by $\chi \lneq \varphi$ the case where $\chi \leq \varphi$, but not $\varphi \leq \chi$.
\end{definition}

Savitskii made the following refinement of Knutson's Conjecture \cite{savitskii}.

\begin{conjecture}[Savitskii]
A character is regular invertible if and only if it is minimal.
\end{conjecture}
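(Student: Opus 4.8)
Since the introduction already announces that both implications of this biconditional fail, my plan is not to prove the conjecture but to disprove it, by exhibiting an explicit counter-example to each direction. I would work entirely at the level of characters, unpacking the pre-order as follows: $\psi \leq \chi$ means exactly that $\psi(\text{id}) \leq \chi(\text{id})$ together with the zero-set containment $Z(\chi) \subseteq Z(\psi)$, where $Z(\cdot)$ denotes the set of elements on which a character vanishes. Thus $\chi$ is minimal precisely when no character simultaneously has an equal-or-larger zero set and strictly smaller degree, nor a strictly larger zero set and equal-or-smaller degree. The heuristic behind Savitskii's conjecture is that $\rho_{\text{reg}}$ has the maximal possible zero set $G \setminus \{\text{id}\}$, so that any solution of $\chi \otimes \varphi = \rho_{\text{reg}}$ forces $\varphi$ to vanish on all of $\text{supp}(\chi) \setminus \{\text{id}\}$; this ties regular invertibility to the zero-set data governing the pre-order. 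The whole point of the counter-examples is that this tie is not tight enough in either direction.

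For the implication \emph{regular invertible} $\Rightarrow$ \emph{minimal}, I would pass to the dicyclic group $\mathrm{Dic}_6$ of order $24$ singled out in the introduction. The plan is to read off its character table, locate an irreducible $\chi$ for which I can write down an explicit virtual inverse $\varphi$ with $\chi \otimes \varphi = \rho_{\text{reg}}$ (certifying regular invertibility), and then produce a second character $\psi$ with $Z(\chi) \subsetneq Z(\psi)$ and $\psi(\text{id}) \leq \chi(\text{id})$, so that $\psi \lneq \chi$ witnesses the failure of minimality. Constructing $\varphi$ is a finite linear-algebra problem over the character table; the delicate part is finding $\psi$, since regular invertibility intuitively \emph{feels} like it should already minimise the support, so one must hunt for a character whose zero set strictly contains that of $\chi$ without paying for it in degree.

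For the converse \emph{minimal} $\Rightarrow$ \emph{regular invertible}, I would reuse one of the Knutson-type counter-examples already in Table~\ref{Knutson Index}, namely $\mathbb{C}[\mathrm{D}_{12} \rtimes S_3]$, whose $4$-dimensional simple module has Knutson Index $2$ and is therefore \emph{not} regular invertible. It then suffices to verify that the corresponding irreducible character is nevertheless minimal, i.e. that no character sits strictly below it; this is a finite check against the other irreducibles and their integer combinations, best organised as the computational search alluded to earlier. The conceptual reason this can happen — and the main obstacle to any naive proof of the conjecture — is that minimality is a purely local condition on zero sets and degrees, whereas regular invertibility is a divisibility condition in the representation ring measured by the Knutson Index. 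Theorem~\ref{thm Knutson Index vs reduction} shows the latter is controlled Sylow-locally, but minimality is \emph{not} preserved by the Savitskii Restriction, so the fact that all minimal irreducible characters of the Sylow subgroups are regular invertible fails to propagate to the global character. Pinning down that mismatch precisely for $\mathrm{D}_{12} \rtimes S_3$ is the crux of the argument.
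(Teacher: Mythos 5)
Your plan is correct and follows the paper's own route exactly: the forward implication is refuted in $\mathrm{Dic}_6$ by exhibiting a regular invertible two-dimensional irreducible ($\chi_5$, with explicit inverse $\lambda$) that is dominated by $\chi_7 \lneq \chi_5$, and the converse is refuted in $\mathrm{D}_{12} \rtimes S_3$ by the four-dimensional irreducibles of Knutson Index $2$, whose minimality is verified by the finite computational check you describe. Your closing observation about Savitskii Restriction failing to preserve minimality is likewise the paper's explanation of why Savitskii's original argument breaks down.
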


Note that Knutson's Conjecture is stated for irreducible characters, whereas this conjecture is for all virtual characters. Our counter-examples in both directions are given by irreducible characters. This shows that the conjecture can not be fixed by restating it just for irreducible characters.

Let us first consider the forward implication. We start by making the following observation. Suppose that $\chi$ is a regular invertible irreducible character with regular inverse $\lambda$. If $\chi'$ is a virtual character such that $\chi' \leq \chi$ we have that 
$$ \chi' \otimes \lambda = \frac{\chi'(\text{id})}{\chi(\text{id})} \ \rho_\text{reg} $$
As the left-hand side is a virtual character and $\chi'(\text{id}) \leq \chi(\text{id})$ we must have that $\chi'(\text{id}) = \chi(\text{id})$. 

It follows that if there is no virtual character $\chi'$ such that $\chi' \lneq \chi$ and $\chi'(\text{id}) = \chi(\text{id})$ then $\chi$ is minimal and we conclude the forward implication of the conjecture holds for $\chi$. However, a character may be strictly smaller than another despite having the same dimension. 

So let us consider a group where this occurs to obtain a counter-example to the forward implication of Savitskii's Conjecture.

\begin{example}
Consider the dicyclic group ${\rm Dic}_6$ of order $24$. Its character table is
\begin{table}[h]
$$
\begin{array}{c|rrrrrrrrr}
  \rm class&\rm1&\rm2&\rm3&\rm4A&\rm4B&\rm4C&\rm6&\rm12A&\rm12B\cr
  \rm size&1&1&2&2&6&6&2&2&2\cr
\hline
  \chi_{1}&1&1&1&1&1&1&1&1&1\cr
  \chi_{2}&1&1&1&1&-1&-1&1&1&1\cr
  \chi_{3}&1&1&1&-1&-1&1&1&-1&-1\cr
  \chi_{4}&1&1&1&-1&1&-1&1&-1&-1\cr
  \chi_{5}&2&2&-1&2&0&0&-1&-1&-1\cr
  \chi_{6}&2&2&-1&-2&0&0&-1&1&1\cr
  \chi_{7}&2&-2&2&0&0&0&-2&0&0\cr
  \chi_{8}&2&-2&-1&0&0&0&1&\sqrt{3}&-\sqrt{3}\cr
  \chi_{9}&2&-2&-1&0&0&0&1&-\sqrt{3}&\sqrt{3}\cr
\end{array}
$$
\caption{Character table of ${\rm Dic}_6$ \cite{WinNT}}
\end{table}

Here $\chi_5$ is not minimal because $\chi_7 \lneq \chi_5$. Let $\lambda = \chi_1 + \chi_4 + \chi_5 + \chi_6 + \chi_7 + \chi_8 + \chi_9$. Then $\chi_5 \otimes \lambda = \rho_\text{reg}$. So we have found a non-minimal character that is regular invertible and disproved the forward implication. Further counter-examples can be found in the following three families of groups; ${\rm Dic}_n$ for even $n$ that is not a power of $2$, ${\rm D}_{2n}$ for $n$ divisible by $4$ and not a power $2$ and ${\rm C}_m \rtimes {\rm D}_8$ for odd $m \geq 3$.
\end{example}

So Savitskii's Conjecture does not hold in general and for irreducible characters in particular. We now disprove the reverse implication by presenting a group with an irreducible character that is minimal but not regular invertible.

It is proven in \cite{savitskii} that if the backwards implication of Savitskii's Conjecture holds for all Sylow $p$-subgroups of a group, then the conjecture holds for that group. It follows that if there is a counter-example to this implication in a group $G$, then there also is a counter-example in one of its Sylow $p$-subgroups and in particular, that if this implication holds for all $p$-groups, then it holds in general.

The idea is to take a minimal character $\chi$ and then compute the Savitskii Restriction of $\chi$ on all of its Sylow $p$-subgroups to obtain a character for each Sylow $p$-subgroup. Savitskii showed that these characters will be minimal and because we assumed that the conjecture is true on all its Sylow $p$-subgroups, we can find a regular inverse for each of them. We can therefore construct a regular inverse for $\chi$ using the Savitskii Induced Inverse algorithm. 

However, this proof is not valid as Savitskii Restriction does not preserve minimality. The proof assumes that $\chi_1 \lneq \chi_2 \implies \chi_1(\text{id}) < \chi_2(\text{id})$. However, we know that this is not always true as we saw with ${\rm Dic}_6$. The following example shows that Savitskii Restriction does not preserve minimality. 

\begin{example}
Consider $G = S_4$. This has Sylow $p$-subgroups $S_1 = {\rm D}_8$ and $S_2 = {\rm C}_3$. Let us consider the Savitskii Restriction to ${\rm D}_8$. 

\begin{table}[h]
\parbox{.45\linewidth}{
\centering
$$
\begin{array}{c|rrrrr}
  \rm class&\rm1&\rm2A&\rm2B&\rm3&\rm4\cr
  \rm size&1&3&6&8&6\cr
\hline
  \rho_{1}&1&1&1&1&1\cr
  \rho_{2}&1&1&-1&1&-1\cr
  \rho_{3}&2&2&0&-1&0\cr
  \rho_{4}&3&-1&-1&0&1\cr
  \rho_{5}&3&-1&1&0&-1\cr
\end{array}
$$
\caption{Character table of $S_4$}
}
\hfill
\parbox{.45\linewidth}{
\centering
$$
\begin{array}{c|rrrrr}
  \rm class&\rm1&\rm2A&\rm2B&\rm2C&\rm4\cr
  \rm size&1&1&2&2&2\cr
\hline
  \lambda_{1}&1&1&1&1&1\cr
  \lambda_{2}&1&1&-1&1&-1\cr
  \lambda_{3}&1&1&1&-1&-1\cr
  \lambda_{4}&1&1&-1&-1&1\cr
  \lambda_{5}&2&-2&0&0&0\cr
\end{array}
$$
\caption{Character table of ${\rm D}_8$}
}
\end{table}

We start by claiming that $\rho_3$ on $S_4$ is a minimal character. There is no two-dimensional character with more zeros than $\rho_3$ and any one-dimensional character has fewer zeros. We have that $\text{gcd}(\rho_2(\text{id}), p_1^3) = 2$ so we need to find $u_1, v_1 \in \mathbb{Z}$ such that $2 u_1 + 8 v_1 = 2$. The solutions to this are $(u_1, v_1) = (1-4a, a)$. With our convention of picking $u$ minimal positive, we would choose $(u_1, v_1) = (1, 0)$, but let us show that the resulting character is not minimal independent of this choice. This gives us that the Savitskii Restriction of $\rho_3$ to ${\rm D}_8$ is 
$$ \nu_1 = (1-4a) \rho_3|_{D_8} + a \rho_{\text{reg}} = (1 - 4a) (\lambda_1 + \lambda_3) + a \lambda_\text{reg} $$
So for any $a \in \mathbb{Z}$ its dimension is at least $2$ and is zero precisely on the conjugacy classes $2C$ and $4$. Therefore, $\lambda_5 \lneq \nu_1$ and $\nu_1$ and we conclude that $\nu_1$ is not minimal.

\end{example}

So Savitskii's proof is not valid and it is, in principle, possible that the backwards implication is true for all $p$-groups but not in general. Our goal is now to find a counter-example for this implication. If Knutson's Conjecture is true for a given group, then this implication will also be trivially true for that group. So we start by considering all the counter-examples that we found for Knutson's Conjecture in the last section.

For ${\rm SL}_2(\mathbb{F}_5)$, we see that the two four-dimensional characters are not minimal because any of the two-dimensional characters is strictly smaller. So the backwards implication is true for irreducible characters of ${\rm SL}_2(\mathbb{F}_5)$ as expected. For ${\rm PSL}_2(\mathbb{F}_{11})$, the irreducible characters that are not regular invertible are precisely the $10$-dimensional characters and they are the only two non-minimal because any of the $5$-dimensional characters is strictly smaller. In fact, Savitskii claimed that his conjecture holds for ${\rm SL}_2(\mathbb{F}_q)$ and ${\rm PSL}_2(\mathbb{F}_q)$.

We should therefore consider one of our original counter-examples to Knutson's Conjecture.

\begin{example}
The character table of ${\rm D}_{12} \rtimes S_3$ is
\begin{table}[h]
$$
\begin{array}{c|rrrrrrrrrrrrrrr}
  \rm class&\rm1&\rm2A&\rm2B&\rm2C&\rm3A&\rm3B&\rm3C&\rm4&\rm6A&\rm6B&\rm6C&\rm6D&\rm6E&\rm6F&\rm6G\cr
  \rm size&1&1&6&6&2&2&4&18&2&2&4&6&6&6&6\cr
\hline
  \rho_{1}&1&1&1&1&1&1&1&1&1&1&1&1&1&1&1\cr
  \rho_{2}&1&1&1&-1&1&1&1&-1&1&1&1&-1&1&1&-1\cr
  \rho_{3}&1&1&-1&-1&1&1&1&1&1&1&1&-1&-1&-1&-1\cr
  \rho_{4}&1&1&-1&1&1&1&1&-1&1&1&1&1&-1&-1&1\cr
  \rho_{5}&2&2&-2&0&2&-1&-1&0&2&-1&-1&0&1&1&0\cr
  \rho_{6}&2&-2&0&0&2&2&2&0&-2&-2&-2&0&0&0&0\cr
  \rho_{7}&2&2&2&0&2&-1&-1&0&2&-1&-1&0&-1&-1&0\cr
  \rho_{8}&2&2&0&2&-1&2&-1&0&-1&2&-1&-1&0&0&-1\cr
  \rho_{9}&2&2&0&-2&-1&2&-1&0&-1&2&-1&1&0&0&1\cr
  \rho_{10}&2&-2&0&0&-1&2&-1&0&1&-2&1&-\alpha&0&0&\alpha\cr
  \rho_{11}&2&-2&0&0&2&-1&-1&0&-2&1&1&0&\alpha&-\alpha&0\cr
  \rho_{12}&2&-2&0&0&2&-1&-1&0&-2&1&1&0&-\alpha&\alpha&0\cr
  \rho_{13}&2&-2&0&0&-1&2&-1&0&1&-2&1&\alpha&0&0&-\alpha\cr
  \rho_{14}&4&4&0&0&-2&-2&1&0&-2&-2&1&0&0&0&0\cr
  \rho_{15}&4&-4&0&0&-2&-2&1&0&2&2&-1&0&0&0&0\cr
\end{array}
$$
where $\alpha = \sqrt{3} i$. 
\caption{Character table of ${\rm D}_{12}{\rtimes}S_3$ \cite{WinNT}}
\end{table}

Code \ref{span} yields that the not regular invertible characters of this group are precisely $\rho_{14}$ and $\rho_{15}$, the $4$-dimensional ones. We can show that these characters are minimal by showing that no integer linear combination of the irreducible characters is strictly smaller than them. To do this, we have to amend Code \ref{minimal} appropriately to show that no character of dimension $1$, $2$ or $3$ is zero whenever $\rho_{14}$ and $\rho_{15}$ are zero. We can also show with the same code that every $4$-dimensional character that is zero whenever $\rho_{14}$ and $\rho_{15}$ are zero has no extra zeros. We, therefore, conclude that $\rho_{14}$ and $\rho_{15}$ are minimal. 
\end{example}

Note also that Savitskii's Conjecture is true on all Sylow $p$-subgroups of ${\rm D}_{12}{\rtimes}S_3$, as these are ${\rm D}_8$ and ${\rm C}_3 \times {\rm C}_3$. So we show that even if the backwards implication is true on all Sylow $p$-subgroups of a group $G$, this implication is not necessarily true on $G$.

\section{Appendix}
\begin{code} \label{span}
The following code checks which irreducible characters of a group $G$ are regular invertible.
\includegraphics[scale = 0.67]{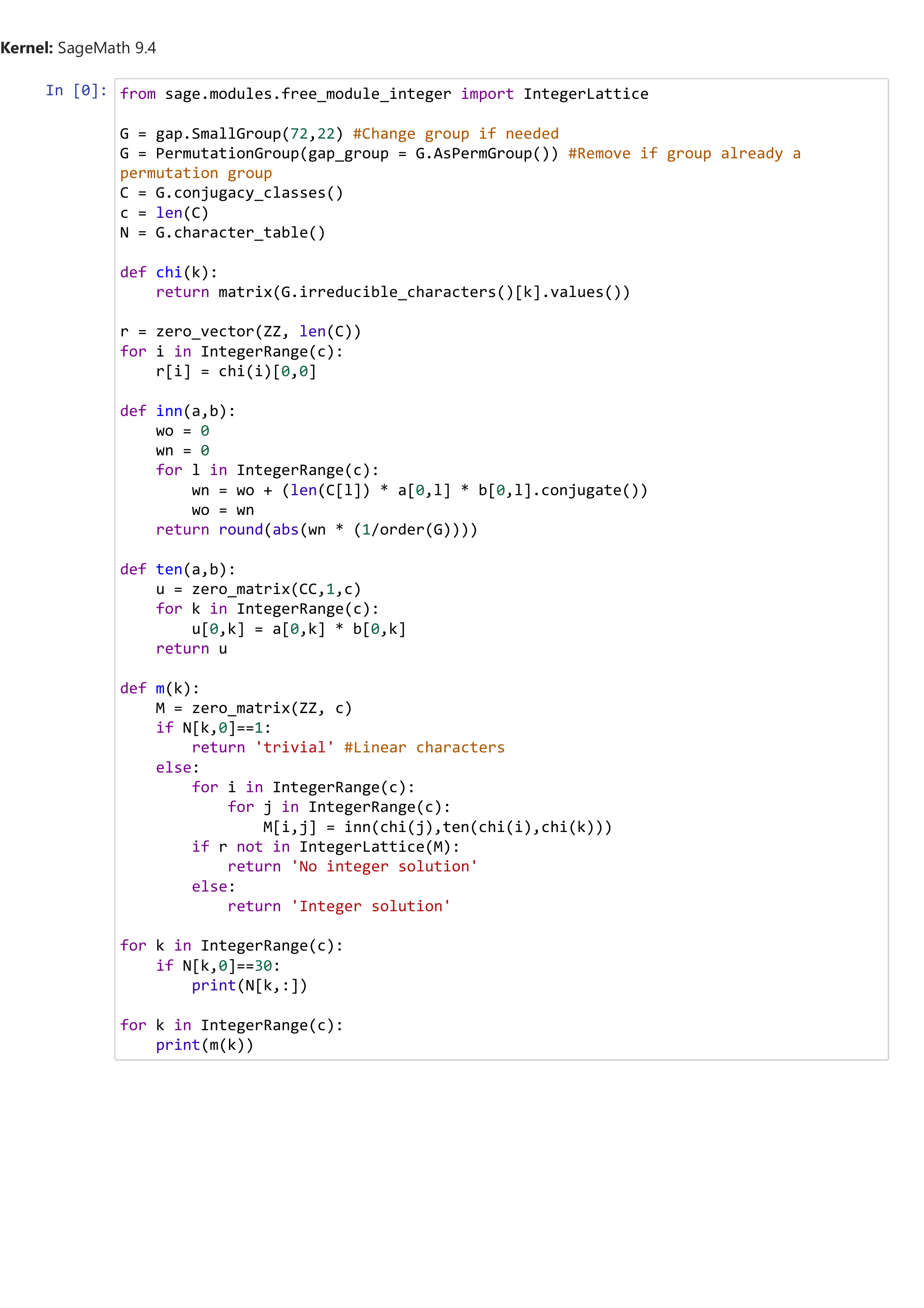}
\end{code}

\newpage
\begin{code} \label{code index}
The following code computes the Knutson Index of all irreducible characters of $G$.
\includegraphics[scale = 0.67]{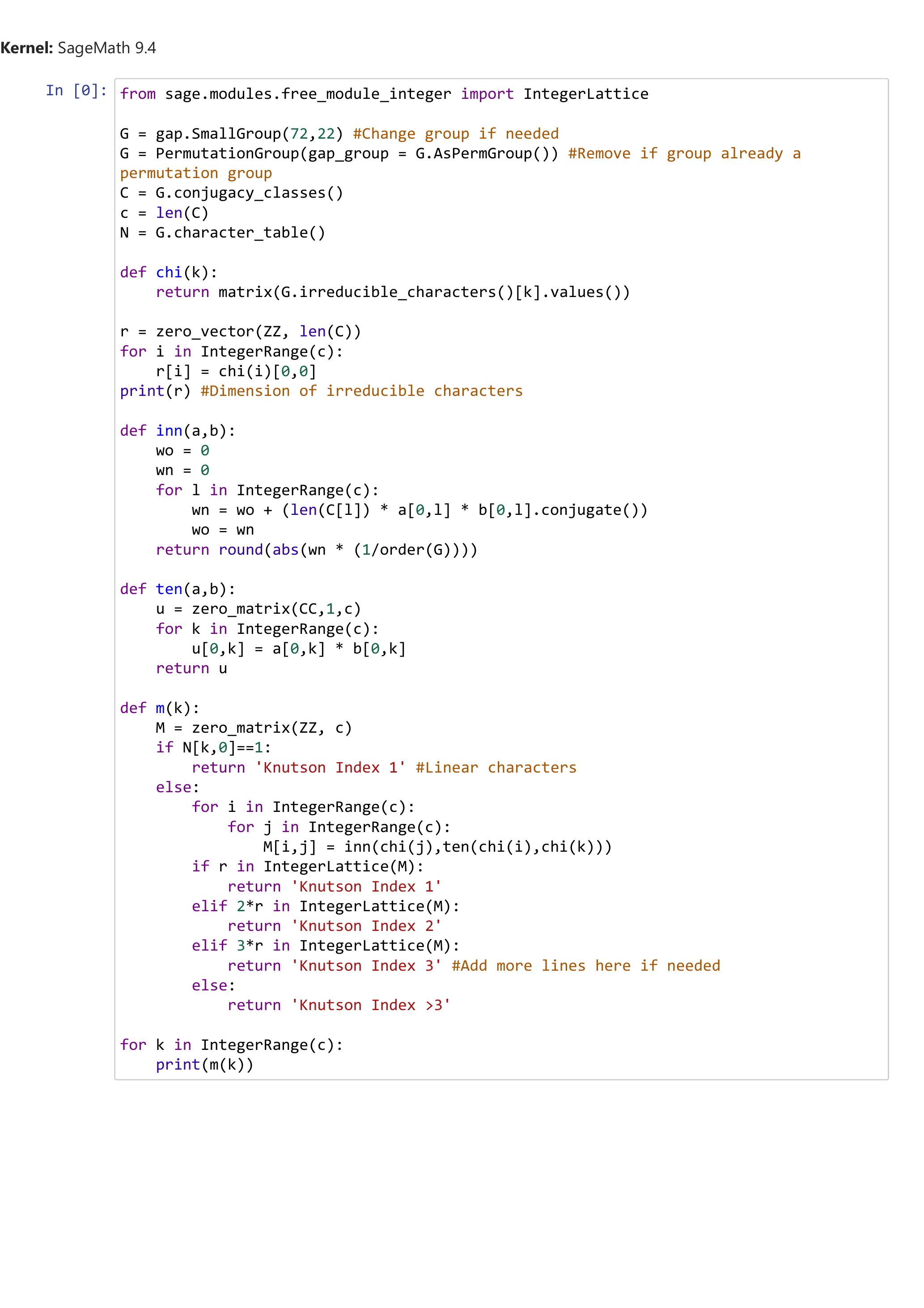}
\end{code}

\newpage
\begin{code} \label{minimal} 
With the following code, we can check whether an irreducible character of $G$ is minimal.
\includegraphics[scale = 0.67]{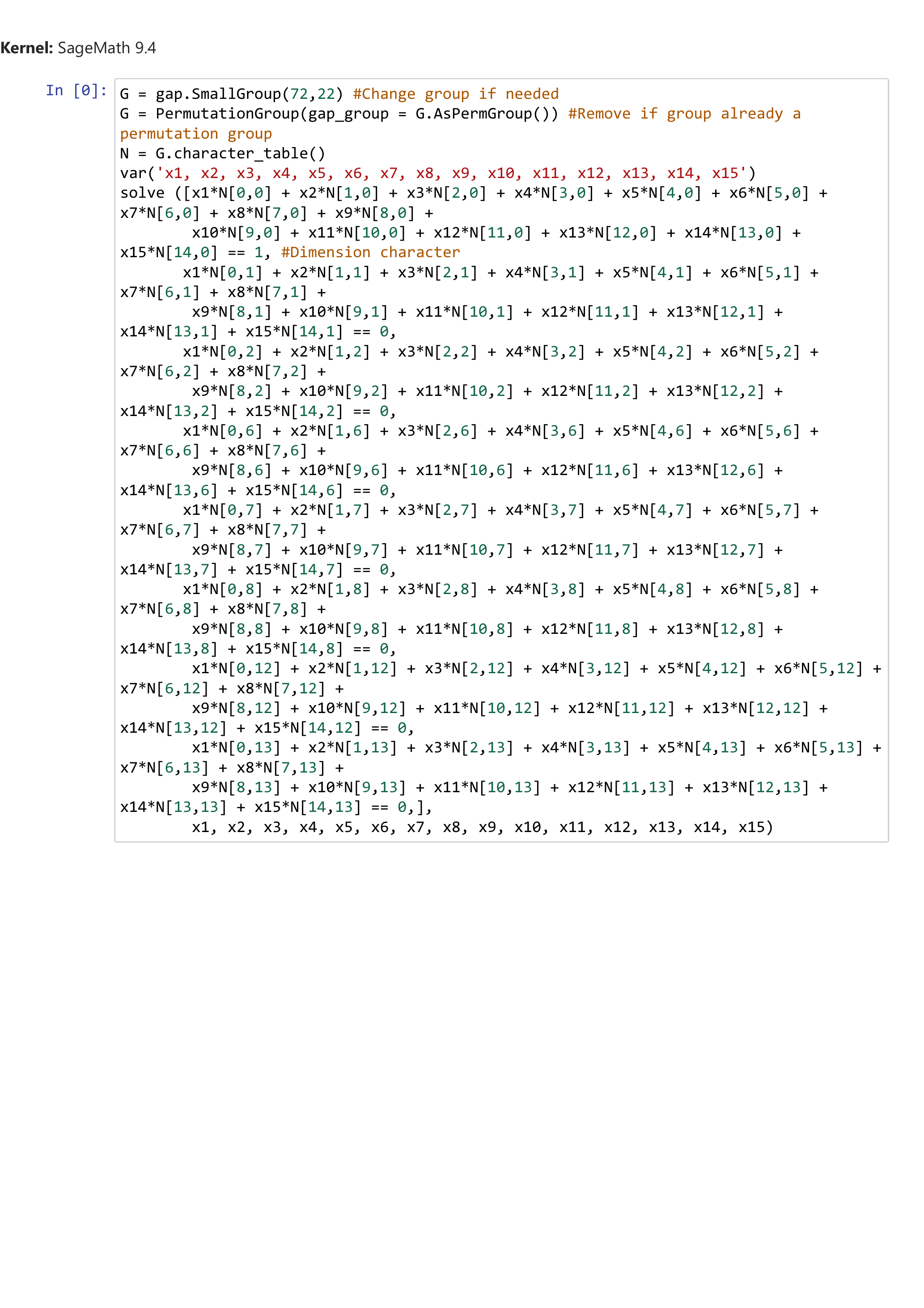}
\end{code}

\newpage
\bibstyle{numeric}
\printbibliography
\end{document}